\newtheorem{theorem}{Theorem}
\newtheorem{lemma}[theorem]{Lemma}
\newtheorem{corollary}[theorem]{Corollary}
\theoremstyle{definition}
\def \Qbar {\overline{\mathbb{Q}}}
\def \O {\mathcal{O}}
\def \p {\mathfrak{p}}
\def \Q {\mathbb{Q}}
\DeclareMathOperator{\Jac}{Jac}
\DeclareMathOperator{\lcm}{lcm}
\DeclareMathOperator{\Cl}{Cl}
\DeclareMathOperator{\ord}{ord}
\DeclareMathOperator{\dv}{div}
\DeclareMathOperator{\rk}{rk}
\DeclareMathOperator{\im}{im}
\DeclareMathOperator{\tors}{tors}
\DeclareMathOperator{\divisor}{div}
\DeclareMathOperator{\Disc}{Disc}
\begin{document}
\bibliographystyle{amsplain}
\author[Aaron Levin]{Aaron Levin }
\thanks{The first author was supported in part by NSF grant DMS-1352407.}
\address{Department of Mathematics\\Michigan State University\\East Lansing, MI 48824, USA}
\email{adlevin@math.msu.edu}

\author[Yan Shengkuan]{Yan Shengkuan}
\address{School of Mathematics and Statistics\\Xi'an Jiaotong University\\Xi'an, 710049, China}
\email{sjwijd@stu.xjtu.edu.cn}

\author[Luke Wiljanen]{Luke Wiljanen}
\address{Department of Mathematics\\Michigan State University\\East Lansing, MI 48824, USA}
\email{wiljane8@msu.edu}

\title{Quadratic fields with a class group of large $3$-rank}
\date{}
\begin{abstract}
We prove that there are $\gg X^\frac{1}{30}/\log X$ imaginary quadratic number fields with an ideal class group of $3$-rank at least $5$ and discriminant bounded in absolute value by $X$.  This improves on an earlier result of Craig, who proved the infinitude of imaginary quadratic fields with an ideal class group of $3$-rank at least $4$.  The proofs rely on constructions of Mestre for $j$-invariant $0$ elliptic curves of large Mordell-Weil rank, and a method of the first author and Gillibert for constructing torsion in ideal class groups of number fields  from rational torsion in Jacobians of curves. We also consider analogous questions concerning rational $3$-torsion in hyperelliptic Jacobians.
\end{abstract}

\thanks{2010\ {\it Mathematics Subject Classification}: Primary 11R29; Secondary 11G30, 14H40}
\keywords{ideal class group, torsion in Jacobians, $3$-rank, quadratic fields}

\maketitle

\section{Introduction}

We study the problem of constructing and counting quadratic number fields with large $3$-rank (i.e., with an ideal class group of large $3$-rank). Our main result is the following:
\renewcommand{\arraystretch}{1.2}
\begin{theorem}
\label{mtheorem}
There exist $\gg X^\frac{1}{30}/\log X$ imaginary (resp.\ real) quadratic number fields $k$ with ${|\Disc(k)|<X}$ and
\begin{align*}
& \rk_3 \Cl(k)\geq 5 \\
\text{(resp. } &\rk_3 \Cl(k)\geq 4\text{).}
\end{align*}
\end{theorem}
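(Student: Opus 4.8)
The plan is to run the pipeline advertised in the introduction. First, use Mestre's construction to produce a $j$-invariant $0$ elliptic curve $\mathcal{E}$ of large Mordell--Weil rank over a rational function field. Next, exploit the complex multiplication by $\mathbb{Z}[\zeta_3]$ (which is exactly the content of $j=0$) to convert that rank into a large rational $3$-torsion subgroup in the Jacobian of an auxiliary hyperelliptic curve $\mathcal{C}$ carrying a degree-$2$ map to $\mathbb{P}^1$. Then invoke the method of the first author and Gillibert to pull that torsion back to $3$-torsion in the class groups of the quadratic residue fields of the fibres of $\mathcal{C}\to\mathbb{P}^1$. Finally, sieve and count the resulting fields.

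For the first two steps: from Mestre's construction of high-rank $j=0$ curves I would take $\mathcal{E}\colon y^2 = x^3 + A(u)$ over $\mathbb{Q}(u)$ with $j(\mathcal{E})=0$ and Mordell--Weil rank as large as the argument demands --- rank $12$ will suffice. Because $j=0$, the $\zeta_3$-action singles out a $\mathbb{Q}(u)$-rational cyclic subgroup of order $3$ in $\mathcal{E}[3]$, hence a $\mathbb{Q}(u)$-rational $3$-isogeny $\psi\colon\mathcal{E}\to\mathcal{E}'$; and the same CM structure forces the pertinent Mordell--Weil rank to be even, say $2s$ with $s=6$. The $\psi$-descent exact sequence then exhibits $(\mathbb{Z}/3\mathbb{Z})^{6}$ inside a quotient $\mathcal{E}'(\mathbb{Q}(u))/\psi\,\mathcal{E}(\mathbb{Q}(u))$ of a $\psi$-Selmer group. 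Reading these six classes through the Kummer description of the module $\mathcal{E}'[\hat\psi]$ --- a rational order-$3$ group, hence $\mu_3$ twisted by the quadratic character of a field $\mathbb{Q}(\sqrt{D(u)})$ with $D(u)$ a constant multiple of $A(u)$ modulo squares --- repackages them as rational $3$-torsion on the Jacobian of an explicit hyperelliptic curve $\mathcal{C}/\mathbb{Q}(u)$ equipped with a degree-$2$ morphism $\mathcal{C}\to\mathbb{P}^1$, so that $(\mathbb{Z}/3\mathbb{Z})^{6}\hookrightarrow\Jac(\mathcal{C})(\mathbb{Q}(u))[3]$. Specializing $u$ at a suitable rational value --- and checking, by a specialization argument applied to the $3$-torsion classes themselves rather than to the rank, that nothing degenerates --- yields a fixed curve $\mathcal{C}_0\colon y^2 = g(x)$ over $\mathbb{Q}$, of bounded genus and with bounded ramification over $\mathbb{P}^1$, still carrying $(\mathbb{Z}/3\mathbb{Z})^{6}\hookrightarrow\Jac(\mathcal{C}_0)(\mathbb{Q})[3]$.

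Now I would feed $\mathcal{C}_0\to\mathbb{P}^1$ into the Gillibert--Levin machinery. Their pullback theorem produces, for all but a thin set of fibres over $x_0\in\mathbb{Q}$, a quadratic field $k=\mathbb{Q}(\sqrt{g(x_0)})$ with $\rk_3\Cl(k)$ bounded below by an expression of the shape $6-\ell-\rk\,\mathcal{O}_k^{\times}$, where the geometric loss $\ell$ depends only on the cover $\mathcal{C}_0\to\mathbb{P}^1$ and equals $1$ in our situation. Since $\rk\,\mathcal{O}_k^{\times}$ is $0$ when $k$ is imaginary and $1$ when $k$ is real, this reads $\rk_3\Cl(k)\geq 5$ in the imaginary case and $\rk_3\Cl(k)\geq 4$ in the real case --- the asymmetry between $5$ and $4$ being exactly the fundamental unit. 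The point of routing everything through Gillibert--Levin rather than through a hands-on $3$-isogeny descent on each $\mathcal{E}_{x_0}$ is precisely that $\ell$ is governed by the \emph{fixed} ramification of $\mathcal{C}_0\to\mathbb{P}^1$: the bad primes of the individual fields $k$ grow with $x_0$, but they are absorbed into the geometry of the cover and never enter the bound.

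For the count, the construction arranges $\deg g = 30$, so the fibre over $x_0$ has $|\Disc(k)|\ll|g(x_0)|$; parametrising the fibres appropriately produces $\gg X^{1/30}$ candidate parameters with $|\Disc(k)|<X$, a fixed interval condition on $x_0$ selecting the imaginary or the real case, and a standard squarefree sieve --- or a restriction to prime values of the relevant polynomial, which is also where the $\log$ enters --- leaving $\gg X^{1/30}/\log X$ of them with pairwise distinct fields. This gives the stated count. I expect the crux to be everything in the second paragraph: constructing the auxiliary curve $\mathcal{C}$ so that its Jacobian realizes all of $(\mathbb{Z}/3\mathbb{Z})^{6}$ rationally \emph{while} still admitting a degree-$2$ map to $\mathbb{P}^1$, verifying that this $3$-torsion --- and not merely the rank of $\mathcal{E}$ --- survives specialization, and trimming the Gillibert--Levin loss $\ell$ down to the value that produces the sharp exponents $5$ and $4$ rather than something weaker.
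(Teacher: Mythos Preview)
Your broad strategy matches the paper's---Mestre's $j=0$ construction, the $3$-isogeny descent map into a hyperelliptic Jacobian, then the Gillibert--Levin specialization---but the arithmetic by which you reach the exponents $5$ and $4$ is off on both sides, and the two errors happen to cancel.

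First, you ask Mestre for a $j=0$ curve of rank $12$ over $\mathbb{Q}(t)$ and then assert that CM by $\mathbb{Z}[\zeta_3]$ forces the rank to be even and sends exactly half of it, namely $(\mathbb{Z}/3\mathbb{Z})^6$, into the relevant isogeny quotient. Neither step is available. Mestre's construction produces six independent points (seven after a further trick that contributes nothing new under \eqref{explicitmap}), not twelve; and since $\zeta_3\notin\mathbb{Q}(t)$ there is no CM over the base field, hence no parity constraint on the rank and no a~priori equal splitting between $E/\lambda E'$ and $E'/\hat\lambda E$. What actually happens is that the six Mestre points, pushed through $(x_0,y_0)\mapsto\frac{1}{3}\dv(y-y_0(t))$, yield \emph{five} independent classes in $\Jac(C)[3](\mathbb{Q})$---and the paper explicitly flags this as fortunate, since na\"ively one would expect about three. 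So $r=5$, not $6$.

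Second, your ``geometric loss $\ell=1$'' is not what the method delivers. The curve $C$ arising here has no rational Weierstrass point, so Theorem~\ref{hyp} (the off-the-shelf result you appear to be invoking) does not apply. The paper instead proves and uses Theorem~\ref{mtheorem2}, whose entire purpose is to drive the loss to zero in the imaginary case: one must choose the Mestre specialization so that (i) for every odd prime $p$ there exists $t_p$ with $3\mid\ord_p f(t_p)$, and (ii) some $\mathbb{Q}(\sqrt{f(t_2)})$ has $2$ non-split. These conditions let the exceptional set $S$ shrink to $\{2,\infty\}$ with a single prime above $2$, giving $\#S_{\rm fin}-\rk\O_{k,S}^\times=0$ (imaginary) or $-1$ (real). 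Your sentence ``the bad primes \ldots\ are absorbed into the geometry of the cover'' is precisely the step that requires this work; it is not automatic.

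Finally, $\deg g=30$ is not a free by-product of the construction. Mestre's polynomial has degree $10$; one substitutes $t\mapsto t^3$ both to force $6\mid\deg f$ (required for \eqref{descentmap} to land in $\Jac(C)$ rather than a generalized Picard group) and, as verified by computation, to push the $3$-rank of the image from $4$ up to $5$.
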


In particular, we produce infinitely many imaginary quadratic fields with a class group of $3$-rank at least $r=5$, improving on the best previous result, due to Craig \cite{Craig1}, who in 1977 proved the same statement but with $r=4$.  In the real quadratic case, we obtain a quantitative version of a result of Diaz y Diaz \cite{Diaz}, who combined Craig's work with Scholz's reflection principle \cite{Scholz} to prove an analogous result giving the infinitude of real quadratic fields with $3$-rank at least $4$.  For smaller $3$-ranks, our method also produces new enumerative results (see Corollary \ref{r3} and the subsequent discussion).

Further families of quadratic fields with a given lower bound for the $3$-rank have been investigated previously by a number of authors (e.g., \cite{Craig2}, \cite{Diaz2}, \cite{KK}).  Examples of imaginary quadratic fields with $3$-rank $6$ were given by Quer \cite{Quer}, and tables of quadratic fields with large $3$-rank have been previously computed  \cite{Belabas, Diaz2, DSW, LQ}.  In particular, the smallest imaginary quadratic field with an ideal class group of $3$-rank $5$ (discovered by Quer) is known from work of Belabas \cite{Belabas}.

Our approach to Theorem \ref{mtheorem} is based on the well-known idea of using $j$-invariant $0$ elliptic curves over $\Q$ of large Mordell-Weil rank to construct quadratic fields with large $3$-rank.  There is an extensive literature detailing (and exploiting) the connection between the $3$-rank of the Selmer group attached to $3$-isogenies between such curves and the $3$-ranks of associated quadratic fields (e.g., \cite{Band, Chen, CP, Delong, Quer, Satge, Top}).  We use a geometric manifestation of this concept, combined with a method of the first author and Gillibert \cite{GL12}, and take advantage of the following chain of constructions, previously exploited in \cite{GL19}:
\begin{multline}
\text{Elliptic curve $E$ of large rank over $\Q(t)$}\stackrel{\text{$p$-descent}}{\xrightarrow{\hspace*{1.5cm}}} \text{$p$-torsion in associated Picard group}\\
 \stackrel{\text{Gillibert-Levin}}{\xrightarrow{\hspace*{1.5cm}}} \text{$p$-torsion in ideal class groups}.\label{diag}
\end{multline}

Using the multiplication-by-$p$ map on $E$, this diagram was used in \cite{GL19} to produce number fields of degree $p^2-1$ having an ideal class group with a large $p$-torsion subgroup.  In particular, taking $p=2$, in \cite{GL19} it was proven that there are infinitely many cubic number fields whose ideal class group contains a subgroup isomorphic to $(\mathbb{Z}/2\mathbb{Z})^{11}$ (see also earlier work of Kulkarni \cite{Kulkarni}).

We apply these ideas to $j$-invariant $0$ elliptic curves over $\Q(t)$ of the form
\begin{align*}
E:\ &y^2=x^3+f(t),\\
E':\ &y^2=x^3-27f(t),
\end{align*}
and use the $3$-isogeny over $\mathbb{Q}(t)$ given by
\begin{align}
\lambda:E'&\to E\label{3isog}\\
(x,y)&\mapsto \left(\frac{x^3-108f(t)}{9x^2}, \frac{(x^3+216f(t))y}{27x^3}\right)\notag.
\end{align}

When $f$ is a nonconstant square-free polynomial of degree $d$, $d\not\equiv 2,4\pmod{6}$, a descent argument yields a homomorphism
\begin{align}
\label{descentmap}
E(\Q(t))/\lambda E'(\Q(t))\hookrightarrow \Jac(C)[3](\Q),
\end{align}
where $\Jac(C)[3](\Q)$ is the rational $3$-torsion subgroup in the Jacobian of the hyperelliptic curve over $\Q$ given by the affine equation
\begin{align*}
C: y^2=f(t).
\end{align*}
Explicitly, the homomorphism \eqref{descentmap} is induced by the map
\begin{align}
E(\Q(t))\to \Jac(C)[3](\Q),\notag\\
(x_0(t),y_0(t))\mapsto \frac{1}{3}\divisor(y-y_0(t))\label{explicitmap},
\end{align}
where $\divisor(y-y_0(t))$ denotes the principal divisor on $C$ associated to the rational function $y-y_0(t)$.  In general, when $f$ is not square-free or $d\equiv 2,4\pmod{6}$, one obtains similar homomorphisms to a suitably generalized Picard group (see \cite{GHL} and \cite{GL19}).  Thus, we have described the first part of \eqref{diag} in the present context.

Next we use the idea, going back to \cite{GL12}, that rational $p$-torsion in the Jacobian of a hyperelliptic curve gives rise to $p$-torsion in ideal class groups of quadratic number fields.  The following quantitative version of this idea was proven in \cite{GL12}, explaining the application of the second map in \eqref{diag}:

\begin{theorem}
\label{hyp}
Let $C$ be a smooth projective hyperelliptic curve over $\Q$ with a rational Weierstrass point, and let $m>1$ be an integer.  Let $g$ denote the genus of $C$.  Then there exist $\gg X^{\frac{1}{2g+1}}/\log X$ imaginary (resp. real) quadratic number fields $k$ with ${|\Disc(k)|<X}$ and
\begin{align*}
& \rk_m \Cl(k)\geq \rk_m \Jac(C)(\Q)_{\tors} \\
\text{(resp. } &\rk_m \Cl(k)\geq \rk_m \Jac(C)(\Q)_{\tors}-1\text{).}
\end{align*}
\end{theorem}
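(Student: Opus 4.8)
The plan is to attach to each suitable $t_0\in\mathbb{Z}$ a homomorphism $\Phi_{t_0}$ from $\Jac(C)(\Q)_{\tors}$ into the ideal class group of the quadratic field $k=\Q(\sqrt{f(t_0)})$, to show that $\Phi_{t_0}$ is injective --- up to a kernel of $m$-rank at most one in the real case --- for all $t_0$ outside a sparse set, and then to count by letting $t_0$ run over an interval of length $\asymp X^{1/(2g+1)}$. To set up, use the rational Weierstrass point to write $C\colon y^2=f(t)$ with $f\in\mathbb{Z}[t]$ squarefree of odd degree $2g+1$; we may assume $g\ge 1$, since for $g=0$ the assertion is merely that there are $\gg X/\log X$ quadratic fields of absolute discriminant below $X$. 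For $g\ge 1$ one has $\Jac(C)(\Q)\cong\operatorname{Pic}(R)$, where $R=\Q[t,y]/(y^2-f)$ is the coordinate ring of $C$ with the point at infinity removed. A torsion class of order $n$ is then represented by an invertible ideal $I=(u(t),\,y-v(t))\subset R$ in Mumford form with $I^{n}=(h_I)$, $h_I=a(t)+b(t)y$; pushing the divisor identity down to $\mathbb{P}^1$ yields the norm relation $a^2-b^2f=c\,u^{n}$ for a constant $c\in\Q^{\times}$. After clearing denominators we may take $a,b,u,v\in\mathbb{Z}[t]$ and $I$ an ideal of $R_{\mathbb{Z}}=\mathbb{Z}[t,y]/(y^2-f)$.

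Next comes the specialization map. For $t_0\in\mathbb{Z}$ with $f(t_0)$ not a perfect square, reduction modulo $t-t_0$ carries $R_{\mathbb{Z}}$ onto the quadratic order $\mathbb{Z}[\sqrt{f(t_0)}]\subset k$; extending the reduced ideal to $\mathcal{O}_k$ and using that the Picard group of a non-maximal quadratic order surjects onto $\Cl(k)$ produces a homomorphism $\Phi_{t_0}\colon\Jac(C)(\Q)_{\tors}\to\Cl(k)$ sending the class of $I$ to that of $\bigl(u(t_0),\,\sqrt{f(t_0)}-v(t_0)\bigr)\mathcal{O}_k$; the relation $I^n=(h_I)$ specializes to $\Phi_{t_0}(I)^{n}=(h_I(t_0))$, so the order of $\Phi_{t_0}(I)$ divides $n$. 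Passing from $\Q$ to $\mathbb{Z}$, and from $\mathbb{Z}[\sqrt{f(t_0)}]$ to $\mathcal{O}_k$, introduces spurious behaviour only at a fixed finite set of primes and at primes dividing the conductor of the order; restricting $t_0$ to a suitable arithmetic progression --- a constant-factor loss in the final count --- makes this irrelevant to the $m$-rank. Hence, if $\Phi_{t_0}$ is injective then $\Cl(k)$ contains a copy of $\Jac(C)(\Q)_{\tors}$ and $\rk_m\Cl(k)\ge\rk_m\Jac(C)(\Q)_{\tors}$ for every $m$; and if merely $\rk_m\ker\Phi_{t_0}\le 1$ we still obtain the bound with a loss of one.

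The crux, and where I expect the main difficulty, is controlling $\ker\Phi_{t_0}$. A nonzero class $I$ lies in the kernel exactly when $h_I(t_0)=\varepsilon\gamma^{n}$ for some unit $\varepsilon\in\mathcal{O}_k^{\times}$ and some $\gamma\in\mathcal{O}_k$. In the imaginary case ($f(t_0)<0$), $\mathcal{O}_k^{\times}$ is finite, and --- excluding $\Q(i)$ and $\Q(\zeta_3)$, which do not occur in our range --- the unit may be absorbed, so the condition becomes that $\pm h_I(t_0)$ is an $n$-th power in $k^{\times}$; writing $\gamma=P+Q\sqrt{f(t_0)}$ with $P,Q\in\Q$, this amounts to a pair of polynomial equations in $(t_0,P,Q)$ cutting out an affine curve $V_I$. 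The decisive point is that $V_I$ has no component on which $t$ restricts to a coordinate function: such a component would exhibit $h_I$ as a power $w^{n}$ in $\Q(C)^{\times}$, whence $\bigl(I(w)^{-1}\bigr)^{n}=R$ and therefore $I=(w)$, contradicting $[I]\ne 0$. Thus $V_I\to\mathbb{A}^1_t$ has degree $\ge 2$ on every component, so the set of $t_0$ with $|t_0|\le Y$ lying in its image is thin and of density zero --- in fact $\ll Y^{1/2}$, by the growth of heights under a degree-$\ge 2$ map on rational components and by the scarcity of points of bounded height (Faltings' theorem when the genus exceeds one) on components of positive genus. Summing over the finitely many nonzero torsion classes bounds the whole exceptional set. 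In the real case ($f(t_0)>0$) the one additional feature is the fundamental unit: modulo $n$-th powers $\varepsilon$ ranges over a cyclic group of order dividing $n$, and the extra kernel it contributes has $m$-rank at most one --- this, and nothing else, accounts for the $-1$ in the real statement. Translating "$I\in\ker\Phi_{t_0}$" into rational points on the auxiliary curves $V_I$, and handling the unit bookkeeping, is the delicate part; the rest is either formal or a routine sieve estimate.

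Finally, the count. Choose $Y=cX^{1/(2g+1)}$ with $c$ small enough that $|\Disc(k)|\le 4|f(t_0)|<X$ for all $t_0$ in the working interval --- $[1,Y]$ in the real case and $[-Y,-1]$ in the imaginary case, after fixing the leading sign of $f$ so that $f$ takes the desired sign there. Discarding the $o(Y)$ exceptional $t_0$, together with the finitely many $t_0$ for which $f(t_0)$ is a perfect square (these give integral points on $y^2=f(t)$, a curve of positive genus, finitely many by Siegel's theorem), leaves $\gg Y$ good $t_0$, each producing a field $k$ with $|\Disc(k)|<X$ and $\rk_m\Cl(k)\ge\rk_m\Jac(C)(\Q)_{\tors}$ (resp.\ $\ge\rk_m\Jac(C)(\Q)_{\tors}-1$). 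A standard estimate for the fibres of $t_0\mapsto$ (squarefree part of $f(t_0)$) --- bounding how often two arguments can have the same squarefree part --- shows that these $\gg Y$ values of $t_0$ realise $\gg Y/\log Y$ distinct fields $k$, giving $\gg X^{1/(2g+1)}/\log X$ such fields in total.
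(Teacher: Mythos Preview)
The paper does not actually prove this statement; it is quoted from \cite{GL12}. What the paper does prove is the closely related Theorem~\ref{thmain}, from which the present result follows by taking $\phi=t$ (degree $2$), $S=\{\infty\}$, $S_0=\emptyset$, and $\psi_1,\ldots,\psi_r$ the functions $y-v_i(t)$ attached (via Mumford representation) to independent $m$-torsion classes in $\Jac(C)(\Q)$. So the meaningful comparison is between your argument and the method behind Theorem~\ref{thmain}.

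The two are the same idea in different packaging. You specialize the ideals $I\subset R$ directly, translate ``$I(t_0)$ principal'' into ``$h_I(t_0)$ is a unit times an $n$-th power,'' realize this as rational points on an auxiliary cover $V_I\to\mathbb{A}^1_t$, and invoke thin-set estimates; your final fibre count for $t_0\mapsto\text{squarefree part of }f(t_0)$ recovers the $\log$ factor. The paper instead encodes the same data via Kummer theory: independence of the $\psi_i$ in $\overline{\Q}(C)^*/(\overline{\Q}(C)^*)^p$ is exactly nontriviality of the divisor classes, the cover $\tilde{C}\to C$ of degree $p^r$ replaces your collection of $V_I$'s, and the quantitative Hilbert Irreducibility of Bilu--Gillibert (Theorem~\ref{HIT}) handles both the thin-set avoidance and the field count in one stroke. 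The paper's route has the advantage that the unit bookkeeping is done once in Lemma~\ref{deglem}: the defect is exactly $\rk\mathcal{O}_{\Q(P),S}^*-\#S_{\rm fin}$, which is $0$ (imaginary) or $1$ (real) when $S=\{\infty\}$, uniformly in $m$.

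Your write-up is thinnest in the real case. The claim that $\varepsilon$ ranges over a \emph{cyclic} group modulo $n$-th powers is false as stated when $n$ is even, since $\mathcal{O}_k^\times/(\mathcal{O}_k^\times)^n\cong(\mathbb{Z}/2)\times(\mathbb{Z}/n)$. The fix is that your $\pm h_I$ thin-set argument---which you already ran in the imaginary case---works verbatim in the real case too, and kills the $\pm 1$ factor; what remains is the quotient $\mathcal{O}_k^\times/\langle -1,(\mathcal{O}_k^\times)^n\rangle\cong\langle\eta\rangle/\langle\eta^n\rangle\cong\mathbb{Z}/n$, which indeed has $m$-rank at most one. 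With that remark your argument is complete.
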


Combining the above ideas gives a method for using $j$-invariant $0$ elliptic curves of large $\mathbb{Q}(t)$-rank to produce infinitely many quadratic fields with an ideal class group of large $3$-rank.

We now discuss our practical implementation of the above ideas to obtain infinitely many imaginary quadratic fields with a class group of $3$-rank at least $5$.  To begin, we use a method of Mestre \cite{Mestre} (see also \cite[\S 11]{ST}) to construct a nontrivial $j$-invariant $0$ elliptic curve $E$ over $\mathbb{Q}(t)$ with $6$ explicitly given points $P_1,\ldots, P_6\in E(\mathbb{Q}(t))$ which are independent in the Mordell-Weil group.  The elliptic curve $E$ satisfies the hypotheses giving rise to \eqref{descentmap} ($f$ is squarefree and $6|\deg f$ in our case), and thus we can use the explicit map \eqref{explicitmap} to compute the image of $P_1,\ldots, P_6$ in $\Jac(C)[3](\mathbb{Q})$.  Using Magma\footnote{Magma programs verifying the claimed calculations are available at http://users.math.msu.edu/users/adlevin/Magma.html.}\cite{Magma}, we verify that $P_1,\ldots, P_6$ yield divisor classes $[D_1],\ldots, [D_6]$ in $\Jac(C)(\mathbb{Q})$ which generate a subgroup isomorphic to $(\mathbb{Z}/3\mathbb{Z})^5$.\footnote{This is perhaps a bit surprising; na\"ively, one might expect roughly half the points to remain independent in $E(\Q(t))/\lambda E'(\Q(t))$ over $\mathbb{Z}/3\mathbb{Z}$, and we have no theoretical explanation for this advantageous situation.} Since this computation in $\Jac(C)$ is direct and independent of the above theoretical considerations (except for knowledge of the map \eqref{explicitmap}), we do not discuss or develop the material behind the homomorphism \eqref{descentmap} further.

Thus, we obtain a hyperelliptic curve $C$ over $\Q$ with $\rk_3\Jac(C)(\Q)_{\tors}\geq 5$.  Unfortunately, the curve $C$ obtained does not posses a rational Weierstrass point, and we cannot apply Theorem \ref{hyp}.  Instead, we use a result inspired by the method developed in \cite{GL19} (precisely to avoid this difficulty), which relied on finding elliptic curves $E$ over $\Q(t)$ that do not have ``universal bad reduction" (see \cite{GL19}) at any rational prime $p$.  In this direction, our primary tool for proving Theorem \ref{mtheorem} is the following result.

\begin{theorem}
\label{mtheorem2}
Let $f\in \Q[t]$ be a squarefree nonconstant polynomial of degree $d$ with $6|d$.  Let $E$ be the elliptic curve over $\Q(t)$ defined by
\begin{align*}
E: Y^2=X^3+f(t).
\end{align*}
Let $C$ be the (smooth projective) hyperelliptic curve over $\Q$ defined by $y^2=f(t)$.  Let
\begin{align*}
r=\rk_3 \im (E(\Q(t))\to \Jac(C)[3](\Q)
\end{align*}
be the $3$-rank of the image of the map \eqref{descentmap}.  In addition, suppose that
\begin{enumerate}
\item For every odd prime $p$, there exists $t_p\in \Q$ such that
\begin{align*}
3|\ord_p f(t_p).
\end{align*}
\label{h1}
\item There exists $t_2\in \Q$ such that $\Q(\sqrt{f(t_2)})$ is a quadratic extension of $\Q$ and the prime $2$ does not split in this extension.\label{h2}
\end{enumerate}
Then there exist $\gg X^\frac{1}{d}/\log X$ imaginary (resp.\ real) quadratic number fields $k$ with ${|\Disc(k)|<X}$ and
\begin{align*}
& \rk_3 \Cl(k)\geq r-\delta, \\
\text{(resp. } &\rk_3 \Cl(k)\geq r-1),
\end{align*}
where $\delta=0$ if $f$ takes negative values on $\mathbb{R}$ and $\delta=1$ otherwise.
\end{theorem}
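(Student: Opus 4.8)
The plan is to run the argument behind Theorem~\ref{hyp} --- the method of \cite{GL12} --- using hypotheses \eqref{h1} and \eqref{h2} in place of the rational Weierstrass point assumed there. From $E$ I retain only the following algebraic input. By \eqref{explicitmap}, the image of \eqref{descentmap} contains a subgroup $G\cong(\mathbb{Z}/3\mathbb{Z})^r$ with a generating set of classes $[D_i]=\bigl[\tfrac13\divisor(\phi_i)\bigr]$, where $\phi_i:=y-y_i(t)$ for suitable $(x_i(t),y_i(t))\in E(\Q(t))$; and since $y_i^2=x_i^3+f$ on $E$, we have $\phi_i\cdot\iota^*\phi_i=x_i(t)^3$, $\iota$ the hyperelliptic involution. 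Fixing for each $[D]\in G$ a lift $(a_i)\in\{0,1,2\}^r$ of its coordinates and setting $\phi_D:=\prod_i\phi_i^{a_i}$, one gets $\divisor(\phi_D)=3D'$ for a divisor $D'$ with $[D']=[D]$, and $\phi_D\cdot\iota^*\phi_D=g_D(t)^3$ where $g_D:=\prod_i x_i^{a_i}\in\Q(t)^*$.

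Next I would set up the family of specializations. Since $6\mid d$, the map $t\colon C\to\mathbb{P}^1$ is unramified over $\infty$ and $C$ has genus $g=d/2-1\ge2$; for all but finitely many $t_0\in\Q$ (Faltings) $f(t_0)$ is a nonzero non-square, the fibre of $t$ over $t_0$ is $\operatorname{Spec}K_{t_0}$ with $K_{t_0}=\Q(\sqrt{f(t_0)})$ a quadratic field, and $\Disc(K_{t_0})\mid 4f(t_0)$, hence $|\Disc(K_{t_0})|\ll|t_0|^d$. The degree-$3$ cover $Y_D\colon z^3=\phi_D$ of $C$ is unramified over $C_{\Qbar}$ precisely because $\divisor(\phi_D)=3D'$; restricting it to the fibre over $t_0$ produces the extension $K_{t_0}(\beta_D^{1/3})/K_{t_0}$, where $\beta_D:=\phi_D|_{\operatorname{Spec}K_{t_0}}\in K_{t_0}^*$ and $\Nm_{K_{t_0}/\Q}(\beta_D)=g_D(t_0)^3$, a cube in $\Q^*$. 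This cube-norm relation, together with unramifiedness over $C$, is what will attach an ideal class to $[D]$.

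The heart of the matter is the ramification analysis: I want a union of residue classes for $t_0$ such that, for every $[D]\in G$, the ideal $(\beta_D)\subset\O_{K_{t_0}}$ is a cube $\mathfrak{b}_D^3$; for the above choice of the $\phi_D$ this makes $[D]\mapsto[\mathfrak{b}_D]$ a homomorphism $G\to\Cl(K_{t_0})[3]$. Let $S$ be a sufficiently large finite set of primes (those dividing denominators of the coefficients of $f,x_i,y_i$, the leading coefficient and discriminant of $f$, the resultants governing poles and singular-fibre incidences of the sections $(x_i,y_i)$, and $\{2,3\}$). For a prime $\q$ of $K_{t_0}$ over $p\notin S$, the reduction of all the data is good, the sections meet the cuspidal fibres over roots of $f$ in smooth points, and the identity $\ord_\q(\beta_D)+\ord_\q(\overline{\beta_D})=3\ord_\q(g_D(t_0))$ (with $\overline{\phantom{x}}$ the conjugation of $K_{t_0}/\Q$), combined with the fact that $p$ is either unramified or ramified in $K_{t_0}$, forces $\ord_\q(\beta_D)\equiv0\pmod3$, so that there is nothing to impose at $p\notin S$. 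At the finitely many primes of $S$ I would use \eqref{h1} --- for each odd $p\in S$ (including $p=3$) fix $t_p$ with $3\mid\ord_p f(t_p)$ and require $t_0\equiv t_p\pmod{p^{M_p}}$, which pins down the local behaviour of $\beta_D$ at $p$ and gives $\ord_\q(\beta_D)\equiv0\pmod3$ --- and \eqref{h2} --- require $t_0\equiv t_2\pmod{2^{M_2}}$, so that $2$ is non-split in $K_{t_0}$, the unique $\q\mid2$ is $\iota$-stable, and the norm identity again yields $\ord_\q(\beta_D)\equiv0\pmod3$. The admissible $t_0$ then form a union of arithmetic progressions of positive density, and a standard sieve-and-counting argument (identical to that in the proof of Theorem~\ref{hyp}) extracts $\gg T/\log T$ of those with $t_0\le T$ for which the fields $K_{t_0}$ are essentially distinct --- the $\log$ being the usual loss in such a count.

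Finally I would establish injectivity and count. If $[D]\mapsto0$, then $\beta_D=u\gamma^3$ with $u\in\O_{K_{t_0}}^*$; when $K_{t_0}$ is imaginary $u=\pm1$ is a cube, so $\beta_D\in(K_{t_0}^*)^3$, which means the geometrically irreducible curve $Y_D$, of genus $3g-2\ge4$, acquires a point of degree $\le2$ over $\Q$ lying above $t_0$ --- and by the finiteness of low-degree points on curves of high genus this happens for only finitely many $t_0$. Hence $G$ injects for all but finitely many admissible $t_0$, giving $\rk_3\Cl(K_{t_0})\ge r$. When $K_{t_0}$ is real, $u$ is determined modulo cubes only up to the fundamental unit, so the image of $G$ can degenerate in at most one dimension, giving $\rk_3\Cl(K_{t_0})\ge r-1$. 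Scholz's reflection principle \cite{Scholz} then bridges the two cases and accounts for $\delta$: if $f$ attains a negative value one may take $f(t_0)<0$ and obtain imaginary fields of $3$-rank $\ge r$ directly, whereas if $f\ge0$ on $\mathbb{R}$ one passes from the real fields $\Q(\sqrt{f(t_0)})$ to their reflections $\Q(\sqrt{-3f(t_0)})$, which are imaginary, have discriminant still $\ll|t_0|^d$, and lose at most one further unit of $3$-rank; the real statement is handled symmetrically. With $|\Disc(K_{t_0})|\ll|t_0|^d$, taking $T\asymp X^{1/d}$ turns the $\gg T/\log T$ admissible $t_0$ into $\gg X^{1/d}/\log X$ quadratic fields. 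The step I expect to be the genuine obstacle is the ramification analysis of the third paragraph: isolating exactly which primes can ramify in $K_{t_0}(\beta_D^{1/3})/K_{t_0}$, verifying that \eqref{h1} and \eqref{h2} tame the finitely many problematic ones --- with $p=2$ and $p=3$ needing particular care because of equal residue characteristic and wild ramification, and with the singular fibres of $E\to\mathbb{P}^1$ requiring attention away from $S$ --- and checking that the cube-norm relation disposes of the remaining primes automatically; this is precisely the input a rational Weierstrass point supplies for free in Theorem~\ref{hyp}.
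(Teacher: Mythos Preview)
Your outline follows the same broad strategy as the paper --- specialize, show the $\beta_D$ generate cube ideals, and convert this to class-group $3$-rank --- but there are two genuine gaps.

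\textbf{Split primes outside $S$.} Your claim that the norm identity
\[
\ord_\q(\beta_D)+\ord_\q(\overline{\beta_D})=3\ord_\q(g_D(t_0)),
\]
``combined with the fact that $p$ is either unramified or ramified,'' forces $3\mid\ord_\q(\beta_D)$ is not correct as stated. For inert or ramified $p$ you have $\overline\q=\q$, so $2\ord_\q(\beta_D)\equiv0\pmod3$ and you are done; but for \emph{split} $p\notin S$ the two summands are independent and the norm identity alone gives nothing. Split primes do occur (e.g.\ whenever $p\mid f(t_0)$ to even order). What actually works --- and what the paper carries out explicitly in Theorem~\ref{Scube} --- is to show that
\[
\min\{\ord_\q(\beta_D),\ord_\q(\overline{\beta_D})\}=\tfrac12\min\{\ord_\q f(t_0),\,3\ord_\q x_i(t_0)\}\equiv 0\pmod 3,
\]
using the polynomial coprimality of (the numerator of) $x_i$ and $f$. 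Once the minimum is divisible by $3$, the norm identity finishes the job. Your ``good reduction / sections meet cuspidal fibres in smooth points'' remark is gesturing at the right geometric fact, but the norm-plus-splitting-type argument you wrote down is not it.

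\textbf{Injectivity via Faltings.} Your assertion that ``finiteness of low-degree points on curves of high genus'' bounds the bad $t_0$ is false in general: a curve of arbitrarily large genus can have infinitely many quadratic points (any hyperelliptic curve does, and bielliptic curves over a positive-rank elliptic curve do as well). There is no a priori reason $Y_D$ avoids these phenomena. The correct mechanism is Hilbert irreducibility: the set of $t_0\in\Q$ for which some nonzero $\beta_D$ becomes a cube in $K_{t_0}$ is the image of rational points on lower-degree subcovers of $\tilde C\to\mathbb{P}^1$, hence a thin set, with $\ll T^{1/2}$ points up to height $T$. The paper does not separate ``distinct fields'' from ``injectivity'' at all: it applies Bilu--Gillibert (Theorem~\ref{HIT}) directly to the full tower $\tilde C\to C\to\mathbb{P}^1$ of degree $2\cdot3^r$, and then the degree bound of Lemma~\ref{deglem} forces $\rk_3\Cl(K_{t_0})\ge r+\#S_{\rm fin}-\rk\O_{K_{t_0},S}^*$ in one stroke.

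One point where your route is actually tidier than the paper's: your handling of $p=2$. Once $2$ is non-split, $\overline\q=\q$ and the norm identity \emph{does} give $3\mid\ord_\q(\beta_D)$ directly. The paper instead leaves $2$ in the exceptional set $S=\{2,\infty\}$ and recovers the lost rank through the bookkeeping $\#S_{\rm fin}-\rk\O_{K_{t_0},S}^*=0$ (imaginary case), which is why hypothesis~\eqref{h2} enters there as well.
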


An outline of the paper is as follows.  After recalling some basic notation and definitions, in Section \ref{sGL} we give (a slight variation on) the method of the first author and Gillibert \cite{GL12} for constructing and counting number fields with an ideal class group of large $p$-rank.  Next, we give two results describing how specializations of $\Q(t)$-points on $j$-invariant $0$ elliptic curves produce ideals which are almost perfect cubes.  In Section \ref{smain2} we prove Theorem \ref{mtheorem2}, which will be combined with constructions of Mestre (Section \ref{SMestre}, and Section \ref{shyp} in the guise of $3$-torsion in hyperelliptic Jacobians) to finish the proof of the main theorem (Theorem \ref{mtheorem}) in the final section.

Where possible, we have preferred to give reasonably elementary and self-contained proofs, both to increase the accessibility of the results, and to maintain the spirit of the origins of the work as an undergraduate research project.

%

\section{Hilbert's Irreducibility Theorem, torsion subgroups of Jacobians, and ideal class groups}
\label{sGL}

We first fix some notation.  Throughout, we let $k$ be a number field and we let $\Disc(k)$ be the (absolute) discriminant of $k$.  Let $S$ be a finite set of places of $k$ containing the archimedean places.  We let $S_{\rm fin}$ denote the subset of finite (nonarchimedean) places in $S$.  We let $\O_{k,S}$ denote the ring of $S$-integers of $k$ and let $\O_{k,S}^*$ be the group of $S$-units.  If $L$ is a finite extension of $k$, we let $S_L$ be the set of places of $L$ lying above places of $S$, and we use $\O_{L,S}$ or $\O_{L,S_L}$ to denote the ring of $S_L$-integers in $L$.  For a place $v$ of $k$ we let $|\cdot|_v$ denote a corresponding absolute value (the specific choice will not be important).  For a prime $\p$ of $\O_k$ we let $\ord_\p$ denote the associated discrete valuation.  We let $\Cl(k)$ and $\Cl(\O_{k,S})$ denote the ideal class group of $\O_k$ and $\O_{k,S}$, respectively.  For an abelian group $A$ we let $\rk A$ denote the free rank of $A$, and if $p$ is a prime and $A$ is finite, we let $\rk_p A$ be the dimension of $A/pA$ over $\mathbb{Z}/p\mathbb{Z}$. We let $H(\alpha)$ denote the absolute multiplicative height of an algebraic number $\alpha$.  If $\alpha=p/q\in\mathbb{Q}$ is written in reduced form, then $H(p/q)=\max\{\log|p|,\log|q|\}$.

We will use the following version of Hilbert's Irreducibility Theorem, combined with an enumerative result of Dvornicich and Zannier \cite{DZ}, in a form proved by Bilu and Gillibert \cite[Th.~3.1]{BG}.

\begin{theorem}[Bilu-Gillibert]
\label{HIT}
Let $k$ be a number field of degree $\ell$ over $\mathbb{Q}$.  Let $C$ be a curve over $k$ and $\phi:C\to\mathbb{P}^1$ a morphism (over $k$) of degree $d$.  Let~$S$ be a finite set of places of~$k$, $\epsilon>0$, and $\mho$ a thin subset of~$k$ \cite[\S 3.1]{BG}.  Consider  the points ${P\in C(\bar k)}$  satisfying
\begin{align*}
\phi(P)&\in k\smallsetminus \mho,\\
|\phi(P)|_v&<\epsilon, \qquad \forall v\in S,\\
H(\phi(P))&\le B.
\end{align*}
Then among the number fields $k(P)$, where~$P$ satisfies the conditions above,
there are $\gg B^\ell/\log B$ distinct fields of degree~$d$ over~$k$.
\end{theorem}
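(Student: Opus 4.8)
\emph{Overview and the base count.}
The plan is to combine two classical inputs. The first is the group‑theoretic form of Hilbert's Irreducibility Theorem, which guarantees that for all but a thin set of $a=\phi(P)$ the fiber $\phi^{-1}(a)$ is irreducible over $k$, so that $[k(P):k]=d$. The second is the quantitative enumeration of specialization fields of Dvornicich and Zannier, which controls how often two distinct base points can yield the same number field; the localization conditions $|\phi(P)|_v<\epsilon$, $v\in S$, only confine the relevant $a$ to a region where counting is clean, and so cost nothing. First I would estimate
\[
N_0:=\#\{a\in k : H(a)\le B,\ |a|_v<\epsilon\ \text{for all }v\in S\},
\]
by writing $a$ as a ratio of $S$‑integers and applying an elementary box / geometry‑of‑numbers argument (or Schanuel's theorem adapted to the side conditions), obtaining $N_0\gg B^{\ell}$ with the implied constant depending only on $k$, $S$ and $\epsilon$. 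This is the step that produces the exponent $\ell=[k:\Q]$.

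\emph{Discarding bad parameters.}
Let $\mho^{*}\subseteq k$ be the union of $\mho$ with the set of $a$ for which $\phi^{-1}(a)$ is reducible over $k$ or meets the branch locus of $\phi$. Hilbert's Irreducibility Theorem shows $\mho^{*}$ is thin, and a sparsity estimate for thin subsets of $k$ (in the form used by Dvornicich and Zannier) shows that only $o(B^{\ell})$ of the $a$ counted by $N_0$ lie in $\mho^{*}$. Hence the set $\mathcal A$ of admissible $a\notin\mho^{*}$ still satisfies $\#\mathcal A\gg B^{\ell}$, and for each $a\in\mathcal A$ and each $P\in\phi^{-1}(a)$ the field $K_a:=k(P)$ is a degree‑$d$ extension of $k$, well defined up to isomorphism.

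\emph{Bounding coincidences.}
For an isomorphism class $K/k$ of degree $d$, put $m_K=\#\{a\in\mathcal A : K_a\cong K\}$, so that $\sum_K m_K=\#\mathcal A$ and $\sum_K m_K^{2}$ counts pairs $(a_1,a_2)\in\mathcal A^2$ with $K_{a_1}\cong K_{a_2}$. The crucial estimate is
\[
\sum_K m_K^{2}\ll B^{\ell}\log B.
\]
To prove it one observes that a coincidence $K_{a_1}\cong K_{a_2}$, together with a choice of isomorphism, yields a $K$‑point $P_1$ of $C$ over $a_1$ and a $K$‑point $P_2$ over $a_2$ with $k(P_1)=k(P_2)=K$; after normalizing $C$ this forces $P_2$ to be a $k$‑rational function of $P_1$, so that $(a_1,a_2)$ is the image of a point on an auxiliary curve $\Gamma\to\mathbb A^1\times\mathbb A^1$ assembled from $\phi$ and that function. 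One then bounds the number of bounded‑height points on each relevant $\Gamma$ — by Siegel's theorem when $\Gamma$ has genus $\ge1$, and by direct estimates in the genus‑$0$ case — while keeping track of the slowly growing number of auxiliary curves that actually occur. This is exactly the mechanism of Dvornicich–Zannier, and the $\log B$ is the loss it incurs.

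\emph{Conclusion, and the main obstacle.}
By the Cauchy–Schwarz inequality,
\[
\#\{K_a : a\in\mathcal A\}\ \ge\ \frac{\bigl(\textstyle\sum_K m_K\bigr)^{2}}{\sum_K m_K^{2}}\ \gg\ \frac{(B^{\ell})^{2}}{B^{\ell}\log B}\ =\ \frac{B^{\ell}}{\log B},
\]
and every $K_a$ is a field of degree $d$ over $k$, which is the assertion. I expect the main obstacle to be the coincidence bound of the third step: turning a field‑isomorphism coincidence into a geometric incidence and then counting the bounded‑height points on the resulting auxiliary curves uniformly enough to produce a bound of the shape $B^{\ell}\log B$. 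The thin‑set sparsity estimate used in the second step is of comparable depth, whereas the base count and the final Cauchy–Schwarz step are routine and the Hilbert‑irreducibility input is classical.
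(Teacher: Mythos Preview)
The paper does not supply a proof of this statement: it is quoted as \cite[Th.~3.1]{BG} and used as a black box (see the sentence introducing Theorem~\ref{HIT} and its invocation inside the proof of Theorem~\ref{thmain}). There is therefore no in-paper argument to compare yours against.

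For what it is worth, your outline follows the Dvornicich--Zannier/Bilu--Gillibert strategy: count admissible $a\in k$, discard a thin set, control coincidences $K_{a_1}\cong K_{a_2}$, and finish by Cauchy--Schwarz. Steps~1, 2, and~4 are routine. The real content is step~3, and here your justification is too loose. The assertion that $k(P_1)=k(P_2)=K$ ``forces $P_2$ to be a $k$-rational function of $P_1$,'' so that $(a_1,a_2)$ lies on an auxiliary curve $\Gamma\to\mathbb{A}^1\times\mathbb{A}^1$, is not correct as written: two $K$-points of $C$ are in general not related by any $k$-morphism $C\to C$, and the pair $(P_1,P_2)$ a priori ranges over the surface $C\times C$, not a curve. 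The actual coincidence bound in \cite{DZ} (adapted in \cite{BG} to incorporate the local conditions and the thin set $\mho$) is obtained by a different mechanism, and producing a uniform estimate of the shape $\sum_K m_K^2\ll B^{\ell}\log B$ requires more than what you have sketched. If you intend to give a proof rather than an outline, that is the step to write out carefully, following those references.
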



Our main tool for enumerating and constructing ideal class groups is the following result based on \cite{GL12}. We give a self-contained proof for the convenience of the reader.

\begin{theorem}
\label{thmain}
Let $C$ be a nonsingular projective curve over $\Q$.  Let $\phi\in \Q(C)$ be a nonconstant rational function on $C$ with $\deg \phi = d>1$ and let $p$ be a prime number.  Let $\psi_1,\ldots, \psi_r\in \Q(C)$ be nonconstant rational functions on $C$ whose images in $\Qbar(C)^*/(\Qbar(C)^*)^p$ generate a subgroup isomorphic to $\left(\mathbb{Z}/p\mathbb{Z}\right)^r$.  Let $S_0$ and $S$ be finite sets of places of $\Q$ and assume that $S$ contains the archimedean place.  Suppose that there exists $\epsilon>0$ and rational numbers $a_v\in \Q$, $v\in S_0$, such that for all points $P\in C(\Qbar)$ satisfying
\begin{align}
\phi(P)&\in \Q,\label{cond1}\\
|\phi(P)-a_v|_v&<\epsilon, \quad \forall v\in S_0,\label{cond2}
\end{align}
we have the equality of fractional $\O_{\Q(P),S}$-ideals
\begin{align*}
\psi_j(P)\O_{\Q(P),S}=\mathfrak{a}_{P,j}^p,
\end{align*}
for some fractional $\O_{\Q(P),S}$-ideal $\mathfrak{a}_{P,j}$, j=1,\ldots, r.  Consider the set $T_B$ of points $P\in C(\Qbar)$ satisfying \eqref{cond1}, \eqref{cond2}, and
\begin{align*}
H(\phi(P))&\leq B.
\end{align*}
Then there are $\gg B/\log B$ distinct number fields $\Q(P)$, $P\in T_B$, satisfying
\begin{align*}
[\Q(P):\Q]&=d,\\
\rk_p \Cl(\Q(P))&\geq r+ \#S_{\rm fin}-\rk \O_{\Q(P),S}^*.
\end{align*}
\end{theorem}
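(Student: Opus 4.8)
The plan is to combine Hilbert irreducibility (Theorem~\ref{HIT}), which yields many number fields $\Q(P)$ of degree $d$, with a Kummer-theoretic computation bounding $\rk_p\Cl(\Q(P))$ for each of them.

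\smallskip
\emph{Setting up Hilbert irreducibility.} The conditions \eqref{cond2} are local, so I would first use weak approximation to choose $a\in\Q$ with $|a-a_v|_v<\epsilon/2$ for every $v\in S_0$, and replace $\phi$ by $\phi-a$: this morphism has the same degree, does not change the fields $\Q(P)$, changes $H(\phi(P))$ by only $O(1)$, and reduces \eqref{cond2} to conditions of the form $|\phi(P)|_v<\epsilon/2$. I then apply Theorem~\ref{HIT} with $k=\Q$ (so $\ell=1$), this morphism, the finite set $S_0$, and a thin set $\mho\subseteq\Q$ which will be specified in the last paragraph (a union of finitely many degeneracy loci). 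The conclusion is $\gg B/\log B$ distinct degree-$d$ number fields, each of the form $\Q(P)$ for some $P\in T_B$ with $\phi(P)\notin\mho$. Choosing one representative $P$ per field, it then suffices to prove the stated $p$-rank inequality for these $P$.

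\smallskip
\emph{The Kummer-theoretic bound.} Fix such a $P$, put $K=\Q(P)$, and set $K(S,p)=\{x\in K^*/(K^*)^p:\ \ord_\p(x)\equiv 0\pmod p\ \text{for all }\p\notin S\}$. The engine is the standard exact sequence of $\mathbb F_p$-vector spaces
\[
0\longrightarrow \O_{K,S}^*/(\O_{K,S}^*)^p\longrightarrow K(S,p)\xrightarrow{\ \theta\ }\Cl(\O_{K,S})[p]\longrightarrow 0,
\]
where $\theta(\bar x)=[\mathfrak a]$ whenever $x\,\O_{K,S}=\mathfrak a^p$. The hypothesis $\psi_j(P)\O_{K,S}=\mathfrak a_{P,j}^p$ shows $\overline{\psi_j(P)}\in K(S,p)$, and each rational prime $q\in S_{\rm fin}$, regarded as an element of $K^*$, lies in $K(S,p)$ since its divisor is supported on $S$; together this is a set of $r+\#S_{\rm fin}$ elements of $K(S,p)$. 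The point is that $\mho$ can be arranged so that for $P\notin\mho$ these elements are $\mathbb F_p$-linearly independent in $K^*/(K^*)^p$ and their span $W$ contains no nonzero element of the image of $\mu(K)$ in $K^*/(K^*)^p$. Granting this, $\dim W=r+\#S_{\rm fin}$, and $W\cap\ker\theta$ injects (via the quotient map) into $\bigl(\O_{K,S}^*/(\O_{K,S}^*)^p\bigr)/(\text{image of }\mu(K))$, which has dimension $\rk\O_{K,S}^*$; hence $\dim\theta(W)\ge r+\#S_{\rm fin}-\rk\O_{K,S}^*$, so $\rk_p\Cl(\O_{K,S})\ge r+\#S_{\rm fin}-\rk\O_{K,S}^*$. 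Since $\Cl(\O_{K,S})$ is a quotient of $\Cl(K)$, we conclude $\rk_p\Cl(K)\ge\rk_p\Cl(\O_{K,S})\ge r+\#S_{\rm fin}-\rk\O_{K,S}^*$, which is the assertion.

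\smallskip
\emph{The degeneracy loci --- the main obstacle.} The real work is in producing the thin set $\mho$ enforcing the independence above. For a nonzero $(e_j)\in\mathbb F_p^r$ the function $\psi=\prod_j\psi_j^{e_j}$ is nonconstant on $C$ --- a constant would be a $p$th power in $\Qbar(C)$, contradicting the hypothesis on the $\psi_j$ --- and lies outside $(\Qbar(C)^*)^p$; hence for each nonzero constant $c$ the curve $w^p=c\,\psi$ is geometrically irreducible and covers $C$ with degree $p$, so by the Chevalley--Weil form of Hilbert irreducibility the set of $P$ with $\phi(P)\in\Q$ and $\psi(P)\in c\,(\Q(P)^*)^p$ is thin. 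I would take $\mho$ to contain all of these loci, for $c$ ranging over the finitely many constants $\zeta\prod_q q^{-f_q}$ (with $\zeta$ a root of unity whose order is bounded in terms of $d$, a possible generator of $\mu(K)$, and $(f_q)\in\mathbb F_p^{\#S_{\rm fin}}$), together with the (again thin) loci on which a fixed constant of this form is a $p$th power in $\Q(P)$ --- equivalently, on which $\Q(P)$ contains a fixed proper extension of $\Q$. For $P\notin\mho$ one checks directly that no nontrivial product $\prod_j\psi_j(P)^{e_j}\prod_q q^{f_q}$ is a root of unity times a $p$th power in $K^*$, which is exactly the independence used above. The delicate points --- verifying that all these loci are genuinely thin (especially the condition that a fixed constant become a $p$th power in $\Q(P)$), and that Theorem~\ref{HIT} still furnishes $\gg B/\log B$ fields after they are removed --- are where I expect the main difficulty to lie.
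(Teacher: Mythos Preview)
Your approach is correct and closely related to the paper's, but organized differently. The paper does not stay on $C$: it passes to the degree-$p^r$ cover $\tilde{C}\to C$ obtained by adjoining $\sqrt[p]{\psi_1},\ldots,\sqrt[p]{\psi_r}$, applies Theorem~\ref{HIT} to the composed map $\psi=\phi\circ\pi:\tilde{C}\to\mathbb{P}^1$ of degree $dp^r$, and packages the entire Kummer computation into a single lemma bounding $[\Q(Q):\Q]\le dp^{\rk_p\Cl(k)+\rk\O_{k,S}^*-\#S_{\rm fin}}$ whenever $\Q(Q)$ is linearly disjoint from a fixed auxiliary field $L=\Q(\zeta_{pN})(\sqrt[p]{q}:q\in S_{\rm fin})$. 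Comparing this with the generic degree $dp^r$ immediately yields the inequality. The payoff is that the thin set $\mho$ has just two pieces---where the degree of $\Q(Q)$ drops, and where $\Q(Q)$ fails to be linearly disjoint from $L$---and the thinness of the second is quoted directly from Bilu--Gillibert. Your route via the exact sequence for $K(S,p)$ makes the class-group bound more transparent, but forces you to enumerate a separate thin locus for every possible relation $\prod\psi_j(P)^{e_j}\prod q^{f_q}\in\zeta\cdot(K^*)^p$; the cases with $\zeta\notin\Q$ or with all $e_j=0$ then reduce to ``$\Q(P)$ contains a fixed proper extension of $\Q$'', which is precisely the linear-disjointness condition. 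So the difficulty you flag in your last paragraph is real, and the paper's device of moving to $\tilde{C}$ and introducing $L$ is exactly what collapses all of those loci into a single citable statement.
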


\begin{proof}
Let $T\subset C(\Qbar)$ consist of the set of points $P\in C(\Qbar)$ satisfying \eqref{cond1} and \eqref{cond2}.  By Kummer theory, the assumptions on $\psi_1,\ldots, \psi_r$ imply the equality
\begin{align*}
[\Qbar(C)\left(\sqrt[p]{\psi_1},\ldots,\sqrt[p]{\psi_r}\right):\Qbar(C)]=p^r.
\end{align*}
Then to the field extension $\mathbb{Q}(C)\left(\sqrt[p]{\psi_1},\ldots,\sqrt[p]{\psi_r}\right)$ of $\mathbb{Q}(C)$, we can associate a (unique up to isomorphism) nonsingular projective curve $\tilde{C}$ over $\mathbb{Q}$ and a morphism $\pi:\tilde{C}\to C$ with $\deg \pi=p^r$.  Let $\tilde{T}=\pi^{-1}(T)\subset \tilde{C}(\Qbar)$ and $\tilde{T}_B=\pi^{-1}(T_B)$. Let
\begin{align*}
L=\mathbb{Q}(\zeta_{pN})(\sqrt[p]{q}\mid q\in S_{\rm fin})=\mathbb{Q}(\zeta_{pN})(\sqrt[p]{u}\mid u\in \O_{\Q,S}^*),
\end{align*}
where $\zeta_{pN}$ is a primitive $pN$th root of unity and $N=\lcm\{n\in\mathbb{N}\mid \phi(n)\leq d\}$ (so that $L$ contains a $p$th root of every root of unity in every number field of degree $\leq d$).  Let $Q\in \tilde{T}$, $P=\pi(Q)$, and $k=\mathbb{Q}(P)$.  Assume additionally that $P$ isn't a pole of any $\psi_i$, $i=1,\ldots, r$.
\begin{lemma}
\label{deglem}
If $L$ and $\Q(Q)$ are linearly disjoint, then
\begin{align*}
[\Q(Q):\Q]\leq dp^{\rk_p\Cl(k)+\rk\O_{k,S}^*-\#S_{\rm fin}}.
\end{align*}
\end{lemma}
\begin{proof}
Since $L$ and $\Q(Q)$ are linearly disjoint, $[\Q(Q):\Q]=[L(Q):L]$, and it suffices to compute a bound for the latter degree.  Since $\deg\phi=d$, we have $[L(P):L]\leq d$ and it suffices to show that
\begin{align*}
[L(Q):L(P)]\leq p^{\rk_p\Cl(k)+\rk\O_{k,S}^*-\#S_{\rm fin}}.
\end{align*}
From the construction of $\pi$,
\begin{align}
\label{LQ}
L(Q)=L(P)\left(\sqrt[p]{\psi_1(P)},\ldots,\sqrt[p]{\psi_r(P)}\right).
\end{align}
Let $t=\rk_p \Cl(\O_{k,S})\leq \rk_p \Cl(k)$ and let $\mathfrak{b}_1,\ldots, \mathfrak{b}_t$ be $\O_{k,S}$-ideals whose ideal classes generate the $p$-torsion subgroup of $\Cl(\O_{k,S})$.  Then $\mathfrak{b}_i^p=\beta_i\O_{k,S}$ for some $\beta_i\in k$, $i=1,\ldots, t$.  Since $[k:\Q]\leq d$ and $k$ and $L$ are linearly disjoint, the only roots of unity in $k$ are $\pm 1$.  Let $t'=\rk \O_{k,S}^*$ and let $u_1,\ldots, u_{t'}$, and $-1$ be generators for $\O_{k,S}^*$.  Let
\begin{align*}
M&=k\left(\sqrt[p]{\beta_1},\ldots,\sqrt[p]{\beta_t},\sqrt[p]{u_1},\ldots,\sqrt[p]{u_{t'}},\zeta_{2p}\right),\\
M'&=k\left(\sqrt[p]{\beta_1},\ldots,\sqrt[p]{\beta_t},\sqrt[p]{u_1},\ldots,\sqrt[p]{u_{t'}}\right),
\end{align*}
for some choice of the $p$th roots (which we now fix).  For every $q\in S_{\rm fin}$, $q\in \O_{k,S}^*$, and so for some choice of $n_q\in \{0,1\}$,  $(-1)^{n_q}q$ is in the multiplicative group generated by $u_1,\ldots, u_{t'}$.  It follows that for some choice of the $p$th roots, $k(\sqrt[p]{(-1)^{n_q}q}\mid q\in S_{\rm fin})\subset   L(P)\cap M'$ and since $k$ and $L$ are linearly disjoint,
\begin{align*}
[L(P)\cap M':k]\geq [k(\sqrt[p]{(-1)^{n_q}q}\mid q\in S_{\rm fin}):k]=[\Q(\sqrt[p]{(-1)^{n_q}q}\mid q\in S_{\rm fin}):\Q]=p^{\#S_{\rm fin}}.
\end{align*}
Therefore,
\begin{align*}
[L(P)M:L(P)]&=[L(P)M':L(P)]\leq[M':L(P)\cap M']=\frac{[M':k]}{[L(P)\cap M':k]}\\
&\leq p^{t+t'-\#S_{\rm fin}}\leq p^{\rk_p\Cl(k)+\rk\O_{k,S}^*-\#S_{\rm fin}}.
\end{align*}

Then from \eqref{LQ} it suffices to show that
\begin{align*}
\sqrt[p]{\psi_i(P)}\in M, \quad i=1,\ldots, r.
\end{align*}
By hypothesis,
\begin{align*}
\psi_i(P)\O_{k,S}=\mathfrak{a}_i^p
\end{align*}
for some fractional $\O_{k,S}$-ideal $\mathfrak{a}_{i}$, $i=1,\ldots, r$.  Since $\mathfrak{a}_{i}^p$ is principal, by definition of the $\mathfrak{b}_j$ we can write
\begin{align*}
\mathfrak{a}_i=(\alpha)\prod_{j=1}^t\mathfrak{b}_j^{b_j}
\end{align*}
for some integers $b_j$ and some element $\alpha\in k$.  Therefore, raising both sides to the $p$th power, we find that
\begin{align*}
\psi_i(P)\O_{k,S}=\left(\alpha^p\prod_{j=1}^t\beta_j^{b_j}\right)\O_{k,S},
\end{align*}
and
\begin{align*}
\psi_i(P)=u\alpha^p\prod_{j=1}^t\beta_j^{b_j}
\end{align*}
for some unit $u\in \O_{k,S}^*$.  Since $u=(-1)^{c_0}\prod_{j=1}^{t'}u_j^{c_j}$ for some integers $c_j$, we find that $\sqrt[p]{\psi_i(P)}\in M$, $i=1,\ldots, r$, as desired.
\end{proof}

Let $\psi=\phi\circ \pi:\tilde{C}\to\mathbb{P}^1$ and let $\mho=\mho_1\cup \mho_2$, where
\begin{align*}
\mho_1&=\{\psi(Q)\mid Q\in \tilde{C}(\Qbar), \psi(Q)\in \mathbb{Q}, [\mathbb{Q}(Q):\mathbb{Q}]<\deg \psi\},\\
\mho_2&=\{\psi(Q)\mid Q\in \tilde{C}(\Qbar), \psi(Q)\in \mathbb{Q}, \mathbb{Q}(Q) \text{ is not linearly disjoint from $L$}\}.
\end{align*}
As in \cite[p.~947]{BG}, $\mho$ is a thin subset of $\mathbb{Q}$.  Let $a\in \mathbb{Q}$ be such that
\begin{align}
|a-a_v|_v&<\frac{\epsilon}{2}, \quad \forall v\in S_0.
\end{align}
Let $\phi'=\phi-a$.  Then $\phi(P)\in \mathbb{Q}$ if and only if $\phi'(P)\in \mathbb{Q}$, and in this case, for $v\in S_0$,  $|\phi'(P)|_v<\frac{\epsilon}{2}$ implies that $|\phi(P)-a_v|_v<\epsilon$.  We now apply Theorem \ref{HIT} to $\phi'$, $k=\mathbb{Q}$, $\mho$, and $\frac{\epsilon}{2}$.  Since $\phi$ and $\phi'$ differ by an automorphism of $\mathbb{P}^1$, $H(\phi(P))$ and $H(\phi'(P))$ differ by a bounded function.  It follows from the above that there exist $\gg B/\log B$ distinct number fields $\mathbb{Q}(P)$, where $P\in T_B$, such that if $Q\in \tilde{T}_B$, $\pi(Q)=P$, then
\begin{align*}
[L(Q):L]=[\Q(Q):\Q]=\deg \psi=(\deg \pi)(\deg \phi)=dp^r.
\end{align*}
On the other hand, by Lemma \ref{deglem}, for such points
\begin{align*}
[\Q(Q):\Q]\leq dp^{\rk_p\Cl(\mathbb{Q}(P))+\rk\O_{\mathbb{Q}(P),S}^*-\#S_{\rm fin}}.
\end{align*}
Combining these two statements yields the desired result.

\end{proof}

\section{Points on $j$-invariant $0$ elliptic curves over $\mathbb{Q}(t)$ and cubes of ideals}
\label{theory}

If $E$ is an elliptic curve over $k(t)$ given by  $Y^2=X^3+f(t)$ and $(X_0(t),Y_0(t))\in E(k(t))$ with $f$ squarefree and either $\deg f$ odd or $6|\deg f$, we first show that for all $t\in k$, $\sqrt{f(t)}-Y_0(t)$ is almost a perfect cube in $k(\sqrt{f(t)})$.  This is immediate from Weil's ``theorem of decomposition" \cite[2.7.15]{BG}, as from preceding discussions the principal divisor associated to $\sqrt{f(t)}-Y_0(t)$ (viewed as a function on the appropriate hyperelliptic curve) is divisible by $3$ in the group of divisors.  We give an elementary proof, which also has the advantage of explicitly bounding the deviation from being a cube (i.e., the set $S$ in Theorem \ref{Scube} below is explicitly constructed in the proof).

\begin{theorem}
\label{Scube}
Let $k$ be a number field.  Let $f\in k[t]$ be a nonconstant squarefree polynomial such that either $\deg f$ is odd or $6|\deg f$.  Let $E$ be the elliptic curve over $k(t)$ defined by
\begin{align*}
E: Y^2=X^3+f(t).
\end{align*}
Let $(X_0(t),Y_0(t))\in E(k(t))$.  For each $t\in k$, let $y_t=\sqrt{f(t)}$, for some choice of the square root.  Then there exists a finite set of places $S$ of $k$, containing the archimedean places, such that for all $t\in k$, we have an equality of fractional $\O_{k(y_t),S}$-ideals
\begin{align*}
(y_t-Y_0(t))\O_{k(y_t),S}=\mathfrak{a}_t^3,
\end{align*}
for some fractional $\O_{k(y_t),S}$-ideal $\mathfrak{a}_t$.
\end{theorem}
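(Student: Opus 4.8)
The plan is to show that for each $t\in k$, the principal divisor of the rational function $y - Y_0(t)$ on the hyperelliptic curve $C: y^2 = f(t)$ is divisible by $3$ in the divisor group, and then descend this statement from divisors on $C$ to fractional ideals in $k(y_t)$, at the cost of enlarging a fixed finite set of places $S$. Concretely, I would work with the affine model $C: y^2 = f(t)$ over $k$; since $f$ is squarefree and $\deg f$ is either odd or divisible by $6$, the hypothesis that $6 \nmid \deg f$ (when $\deg f$ is even) guarantees either one rational Weierstrass point at infinity (odd degree) or two points at infinity whose behavior is controlled. The key algebraic identity is that on $C$ one has
\begin{align*}
(y - Y_0(t))(y + Y_0(t)) = y^2 - Y_0(t)^2 = f(t) - Y_0(t)^2 = -X_0(t)^3,
\end{align*}
using the defining equation $Y_0(t)^2 = X_0(t)^3 + f(t)$ of $E$. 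So the product of $y - Y_0(t)$ and $y + Y_0(t)$ is $-X_0(t)^3$, a perfect cube times $-1$, as a rational function on $C$.

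First I would analyze the divisor of $y - Y_0(t)$ on $C$. Away from the poles and zeros of $X_0$ and $Y_0$ and away from the Weierstrass locus, at a place where $y - Y_0(t)$ has a zero the function $y + Y_0(t)$ is a unit (since $y = Y_0(t) \ne 0$ generically and $2Y_0(t)$ is then a unit), hence the identity above forces $\ord(y - Y_0(t)) = 3\ord(X_0)$ there, a multiple of $3$. The points requiring care are: the zeros/poles of $X_0(t)$ and $Y_0(t)$ (where both factors $y\pm Y_0(t)$ may vanish simultaneously), the ramification points of $C \to \mathbb{P}^1_t$ (the Weierstrass points, i.e.\ the roots of $f$), and the points above $t = \infty$. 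At a ramification point above a root $t_0$ of $f$, since $f$ is squarefree the point is a simple Weierstrass point, the local uniformizer is $y$, and $\ord(y - Y_0(t)) = \ord(y)$ (assuming $Y_0(t_0)\ne 0$) which is $1$ — NOT a multiple of $3$ — but this happens at only finitely many points, namely a fixed finite set independent of $t$, and likewise the zeros/poles of $X_0, Y_0$ and the points at infinity form a fixed finite set $\Sigma \subset C(\kbar)$ depending only on $f$ and $(X_0, Y_0)$, not on the specialization $t$. So: outside $\Sigma$, $\dv(y - Y_0(t))$ is divisible by $3$.

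Next I would descend to ideals. Let $S$ be the (finite) set of places of $k$ consisting of the archimedean places together with all finite places lying below the points of $\Sigma$ (more precisely, below the $t$-coordinates of points in $\Sigma$, together with places of bad reduction and places dividing leading coefficients and discriminants of $f$, $X_0$, $Y_0$ — all of this is a fixed finite set). For $t\in k$, the specialization sends the function $y - Y_0(t)$ on $C$ to the element $y_t - Y_0(t) \in k(y_t)$, and the divisibility-by-$3$ statement for $\dv(y-Y_0(t))$ on $C$, restricted to the part supported outside $\Sigma$, translates into the statement that for every prime $\p$ of $\O_{k(y_t)}$ not lying above a place in $S$, $\ord_\p(y_t - Y_0(t)) \equiv 0 \pmod 3$: indeed such a $\p$ corresponds (via the reduction $C_t$ of $C$ mod $\p$, which is a good reduction by choice of $S$) to a specialization of a point of $C$ away from $\Sigma$, and the local analysis above applies verbatim after reduction. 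Here I need that the reduction of $y\pm Y_0(t)$ behaves as in characteristic $0$, which is why $S$ must absorb the primes of bad reduction and the primes where the two factors could unexpectedly share a zero. Hence $(y_t - Y_0(t))\O_{k(y_t),S} = \mathfrak{a}_t^3$ for the fractional $\O_{k(y_t),S}$-ideal $\mathfrak{a}_t$ defined by $\ord_\p(\mathfrak{a}_t) = \tfrac{1}{3}\ord_\p(y_t - Y_0(t))$ at each $\p\notin S$.

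The main obstacle is making the "fixed finite set $S$, independent of $t$" claim airtight. The subtlety is that the places $\p$ of $k(y_t)$ vary wildly with $t$ (the field $k(y_t)$ itself varies), so one cannot literally quote a specialization result over a fixed base; instead one argues uniformly, bounding the "bad" locus on $C$ geometrically (it is cut out by the intersection of $\{y = Y_0\}$ and $\{y = -Y_0\}$, plus the branch locus, plus fibers over $\infty$, all computed once and for all on $C/k$) and then noting that for any $t$ and any $\p$ outside the finitely many rational primes below this geometric bad locus, the required local congruence holds. The degenerate cases — when $Y_0(t) = 0$ or $X_0(t) = 0$ for the particular specialized value, or when $(X_0(t), Y_0(t))$ hits the point at infinity of $E$ — must be checked to still land inside $S$ or to be handled directly (e.g.\ if $Y_0(t) = 0$ then $y_t - Y_0(t) = y_t$ and $y_t^2 = f(t) = -X_0(t)^3$, so $(y_t)\O_{k(y_t),S}$ is visibly related to a cube after adjusting $S$ by the primes dividing $2$ and those below the zeros of $Y_0$). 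Carefully enumerating these exceptional situations, and the corresponding enlargement of $S$, is the technical heart; everything else is the straightforward valuation bookkeeping sketched above.
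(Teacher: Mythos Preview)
Your overall strategy---compute $\dv(y - Y_0(t))$ on the hyperelliptic curve $C$ and pass to ideals by specialization---is precisely the route the paper itself mentions (citing Weil's theorem of decomposition) but deliberately sets aside in favor of a self-contained valuation computation. However, your execution has a genuine gap in the ``descent to ideals'' step. Your claim that for $\p \notin S$ the reduction of $(t_0, y_{t_0})$ mod $\p$ lands \emph{away from} $\bar\Sigma$ is false: if $\ord_\p t_0 < 0$ the reduction lands at a point over $t = \infty$, and if $t_0$ is congruent mod $\p$ to a root of $f$ the reduction lands at a Weierstrass point. Since $t_0$ ranges over all of $k$, no fixed finite $S$ can exclude these possibilities. (Your Weierstrass computation is also simply wrong: with $Y_0(t_0)\neq 0$ the function $y - Y_0(t)$ takes the value $-Y_0(t_0)\neq 0$ at the Weierstrass point, so the order there is $0$, not $1$.)

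The correct repair is that $\Sigma$ is in fact empty: under the stated hypotheses, $\dv(y - Y_0(t))$ is divisible by $3$ at \emph{every} point of $C$. At the affine Weierstrass points this holds because the numerator of $X_0$ is coprime to $f$ (forcing $Y_0$ not to vanish there); at the points over $t=\infty$ it is exactly where the hypothesis ``$\deg f$ odd or $6\mid\deg f$'' is used. With $\Sigma=\emptyset$, the Weil-decomposition argument goes through cleanly. The paper's proof is organized rather differently: after writing $X_0 = a/d^2$, $Y_0 = b/d^3$ in lowest terms and choosing $S$ so that $2$ and all relevant coefficients and leading terms are $S$-units, it reduces the claim at each $\p\notin S$ to the elementary divisibility $3\mid\min\{\ord_\p f(t_0), 3\ord_\p X_0(t_0)\}$, which is then verified by a three-case split on the sign of $\ord_\p t_0$ and on $\deg f\bmod 6$---the degree hypothesis entering only in the cases $\ord_\p t_0 < 0$.
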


\begin{proof}
We may write
\begin{align*}
X_0(t)=\frac{a(t)}{d(t)^2}, \quad Y_0(t)=\frac{b(t)}{d(t)^3},
\end{align*}
for some polynomials $a(t),b(t),d(t)\in k[t]$ such that $a(t)b(t)$ and $d(t)$ are coprime polynomials.  Since $f(t)$ is squarefree, it is also clear that $a(t)$, $b(t)$, and $f(t)$ are pairwise coprime.  Thus, there exist polynomials $g(t), h(t)\in k[t]$ such that
\begin{align}
\label{coprime}
a(t)g(t)+f(t)h(t)=1.
\end{align}
Let $S$ be a finite set of places of $k$ containing the archimedean places such that $a,b,d,f,g,h\in \O_{k,S}[t]$, $2\in \O_{k,S}^*$, and the leading coefficients of $a,b,d$, and $f$ are $S$-units.

Let $t\in k$ and let $\p$ be a prime of $\O_{k(y_t)}$ not lying above a prime of $S$.  Then we need to show that $3|\ord_{\p}(y_t-Y_0(t))$.  Since $3|\ord_{\p}(X_0(t)^3)$ and
\begin{align*}
-X_0(t)^3=f(t)-Y_0(t)^2=(y_t-Y_0(t))(y_t+Y_0(t)),
\end{align*}
we find that $$3|\ord_{\p}((y_t-Y_0(t))(y_t+Y_0(t))=\ord_{\p}(y_t-Y_0(t))+\ord_{\p}(y_t+Y_0(t)).$$

From this equality, it clearly suffices to prove that  $$3|\min\{\ord_{\p}(y_t-Y_0(t)),\ord_{\p}(y_t+Y_0(t))\}.$$  By hypothesis, $2\in \O_{k,S}^*$ and so $\ord_{\p}(2)=0$. Then it is elementary that
\begin{align*}
\min\{\ord_{\p}(y_t-Y_0(t)),\ord_{\p}(y_t+Y_0(t))\}= \min\{\ord_{\p}y_t,\ord_{\p}Y_0(t)\}.
\end{align*}

Moreover,
\begin{align*}
\min\{\ord_{\p}y_t,\ord_{\p} Y_0(t)\}&=\frac{1}{2}\min\{\ord_{\p}y_t^2,\ord_{\p} Y_0(t)^2\}=\frac{1}{2}\min\{\ord_{\p}f(t),\ord_{\p} (X_0(t)^3+f(t))\}\\
&=\frac{1}{2}\min\{\ord_{\p}f(t),\ord_{\p} X_0(t)^3\}\\
&=\frac{1}{2}\min\{\ord_{\p}f(t),3\ord_{\p} X_0(t)\}.
\end{align*}

Hence it suffices to prove that
\begin{align}
\label{3div}
3|\min\{ \ord_{\p} f(t),3\ord_{\p}X_0(t)\}.
\end{align}

This clearly holds when $\ord_{\p}f(t)\geq 3\ord_{\p}X_0(t)$, and so we now assume that
\begin{align}
\label{fXineq}
\ord_{\p}f(t)<3\ord_{\p}X_0(t).
\end{align}
We consider three cases depending on $\ord_{\p}t$ and $\deg f$.

\textbf{Case 1}: $\ord_{\p}t\geq 0$.

Since $a,d,f,g,h\in \O_{k,S}[t]$, the quantities $\ord_{\p}a(t), \ord_{\p}d(t), \ord_{\p}f(t), \ord_{\p}g(t), \ord_{\p}h(t)$ are all nonnegative. It follows from \eqref{fXineq} that
\begin{align*}
\ord_{\p}X_0(t)>0,
\end{align*}
and as $X_0(t)=\frac{a(t)}{d(t)^2}$, we also have
$$\ord_{\p} a(t)=\ord_{\p}X_0(t)+2\ord_{\p}d(t)\ge \ord_{\p}X_0(t)>0.$$
Since $$\ord_{\p}(a(t)g(t)+f(t)h(t))=\ord_{\p}1=0,$$
we must have $\ord_{\p}f(t)=0$, proving \eqref{3div}.

\textbf{Case 2}: $\ord_{\p}t<0$ and $3|\deg f.$

Since $f\in \O_{k,S}[t]$ and the leading coefficient of $f$ is an $S$-unit, $\ord_{\p}t<0$ implies that
\begin{align*}
\ord_{\p}f(t)=(\deg f)\ord_{\p}t.
\end{align*}
By hypothesis $3|\deg f$ and so $3|\ord_{\p}f(t)$ as desired.


\textbf{Case 3}: $\ord_{\p}t<0$ and $2,3\nmid \deg f$.

The divisibility assumptions on $\deg f$ imply that $\deg fd^6\ne \deg b^2$ and $\deg fd^6 \ne \deg a^3$. Since $b^2=a^3+fd^6$, it follows that $$\deg a^3 =\deg b^2>\deg fd^6.$$

%

Hence $$3\deg a=2\deg b>\deg f +6\deg d.$$
Since $a,d,f\in \O_{k,S}[t]$, the leading coefficients of $a,d,f$ are $S$-units, and $\ord_{\p}t<0$, we have the identities
\begin{align*}
\ord_{\p}a(t)&=(\deg a)\ord_{\p}t,\\
\ord_{\p}d(t)&=(\deg d)\ord_{\p}t,\\
\ord_{\p}f(t)&=(\deg f) \ord_{\p}t.
\end{align*}
Hence, $$3\ord_{\p}X_0(t)=(3\deg a-6\deg d)\ord_{\p}t<(\deg f)\ord_{\p}t=\ord_{\p}f(t),$$
contradicting \eqref{fXineq}.

Thus, in all cases we have proven \eqref{3div}.
\end{proof}

The next result gives a way to handle the odd primes in $S$ in Theorem \ref{Scube}.  The hypothesis \eqref{avoidhyp} below is closely related to the condition from \cite{GL19} that $E$ doesn't have ``universal bad reduction" at $\p$  (if $\p$ doesn't lie above $2$ or $3$, then in our case $E$ doesn't have universal bad reduction at $\p$ if and only if there exists $t_{\p}\in k$ such that $6|\ord_{\p} f(t_{\p})$).

\begin{theorem}
\label{avoid}
Let $k$ be a number field.  Let $f\in k[t]$ be a nonconstant polynomial.  Let $E$ be the elliptic curve over $k(t)$ defined by
\begin{align*}
E: Y^2=X^3+f(t).
\end{align*}
Let $(X_0(t),Y_0(t))\in E(k(t))$.  For each $t\in k$, let $y_t=\sqrt{f(t)}$, for some choice of the square root.  Let $\p$ be a prime of $\O_k$ not lying above $2$.  Suppose that there exists $t_{\p}\in k$ such that
\begin{align}
\label{avoidhyp}
3|\ord_{\p} f(t_{\p}).
\end{align}
Then there exists $\epsilon_{\p}>0$ such that if $t\in k$ and $|t-t_{\p}|_{\p}<\epsilon_{\p}$, then for any prime $\mathfrak{q}$ of $k(y_t)$ lying above $\p$, we have
\begin{align*}
3|\ord_{\mathfrak{q}}(y_t-Y_0(t)).
\end{align*}
\end{theorem}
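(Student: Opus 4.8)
The plan is to localize the argument used for Theorem~\ref{Scube} at $\p$, exploiting that $t$ lies $\p$-adically close to $t_{\p}$ in order to turn the ``bad'' prime $\p$ into a good one. First I would fix $\epsilon_{\p}>0$ small enough that for every $t\in k$ with $|t-t_{\p}|_{\p}<\epsilon_{\p}$ one has $\ord_{\p}f(t)=\ord_{\p}f(t_{\p})$; this uses only that $f$ is a polynomial, hence $\p$-adically continuous, together with $f(t_{\p})\neq 0$, i.e.\ $\ord_{\p}f(t_{\p})<\infty$, which I take to be implicit in the hypothesis. Shrinking $\epsilon_{\p}$ further I may also assume that no such $t$ is a pole of $X_0$ or $Y_0$, the poles forming a finite set (and taking $t_{\p}$ itself not among them; the remaining case is discussed below). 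By hypothesis~\eqref{avoidhyp} this gives $3\mid\ord_{\p}f(t)$, hence $3\mid\ord_{\q}f(t)$ for every prime $\q$ of $k(y_t)$ above $\p$; and since $2\ord_{\q}y_t=\ord_{\q}f(t)$ with $\gcd(2,3)=1$, also $3\mid\ord_{\q}y_t$.

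Next I would run the same product identity as in Theorem~\ref{Scube}: from $-X_0(t)^3=f(t)-Y_0(t)^2=(y_t-Y_0(t))(y_t+Y_0(t))$, applying $\ord_{\q}$ yields
\[
\ord_{\q}(y_t-Y_0(t))+\ord_{\q}(y_t+Y_0(t))=3\ord_{\q}X_0(t)\equiv 0\pmod{3}.
\]
So it suffices to prove that one of the two summands on the left is divisible by $3$; and since $\p\nmid 2$, so that $2$ is a unit at $\q$, the elementary identity already used in that proof gives $\min\{\ord_{\q}(y_t-Y_0(t)),\ord_{\q}(y_t+Y_0(t))\}=\min\{\ord_{\q}y_t,\ord_{\q}Y_0(t)\}$. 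Hence it is enough to show $3\mid\min\{\ord_{\q}y_t,\ord_{\q}Y_0(t)\}$.

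To finish I would split into two cases. If $\ord_{\q}Y_0(t)\ge\ord_{\q}y_t$, the minimum equals $\ord_{\q}y_t$, which we have already seen is divisible by $3$. If $\ord_{\q}Y_0(t)<\ord_{\q}y_t$, then $\ord_{\q}(Y_0(t)^2)<\ord_{\q}f(t)$, so applying the ultrametric inequality to $X_0(t)^3=Y_0(t)^2-f(t)$ forces $3\ord_{\q}X_0(t)=\ord_{\q}(Y_0(t)^2)=2\ord_{\q}Y_0(t)$; as $\gcd(2,3)=1$ this gives $3\mid\ord_{\q}Y_0(t)$, which is the minimum in this case. Either way $\min\{\ord_{\q}y_t,\ord_{\q}Y_0(t)\}$ is divisible by $3$, and then the displayed identity forces both $\ord_{\q}(y_t-Y_0(t))$ and $\ord_{\q}(y_t+Y_0(t))$ to be divisible by $3$, proving the theorem.

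I expect the only real friction to be bookkeeping rather than mathematics: pinning down $\epsilon_{\p}$ so that $\ord_{\p}f$ is locally constant and all points involved stay finite, and dispatching the degenerate subcases in which $X_0(t)=0$ or $y_t=\pm Y_0(t)$ (some order becomes $\infty$ and the divisibility is trivial), together with the possibility that $t_{\p}$ is a pole of $(X_0,Y_0)$ (then $P_0(t)$ is $\p$-adically near the identity of $E$ for $t$ near $t_{\p}$, and a direct computation of $\ord_{\q}X_0(t)$ and $\ord_{\q}Y_0(t)$ along the formal group again puts one in the second case above). The core is a verbatim $\p$-adic localization of the argument already given for Theorem~\ref{Scube}.
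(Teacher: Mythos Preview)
Your argument is correct and is essentially the paper's own proof: the paper also first uses $\p$-adic continuity to get $3\mid\ord_{\q}f(t)$ and then simply cites the equivalence with condition~\eqref{3div} from the proof of Theorem~\ref{Scube}, which is exactly the reduction you unwind by hand (and which makes your extra worry about poles of $X_0,Y_0$ unnecessary, since \eqref{3div} is insensitive to the sign of $\ord_{\q}X_0(t)$).
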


\begin{proof}
By continuity, for all $t\in k$ sufficiently $\p$-adically close to $t_{\p}$, we have $\ord_{\p}f(t)=\ord_{\p}f(t_{\p})$ and $3|\ord_{\p}f(t)$ (and $3|\ord_{\mathfrak{q}}f(t)$ for any prime $\mathfrak{q}$ of $k(y_t)$ lying above $\p$).  Since $\p$ does not lie above $2$, the result follows from the equivalence with the condition \eqref{3div} used in the proof of Theorem \ref{Scube}.
\end{proof}

\section{Proof of Theorem \ref{mtheorem2}}
\label{smain2}

We now have all of the tools to prove Theorem \ref{mtheorem2}.

\begin{proof}[Proof of Theorem \ref{mtheorem2}]
Let $P_i=(X_i(t),Y_i(t))\in E(\Q(t))$, $i=1,\ldots, r$, be points whose images are independent (over $\mathbb{Z}/3\mathbb{Z}$) in $\Jac(C)[3](\Q)$.   Let $S_0$ consist of the prime $2$ and the (finite) union of the set of places of $\Q$ given by Theorem \ref{Scube} (with $k=\Q$) applied to $P_i$, $i=1,\ldots, r$.  By hypothesis, for each odd prime $p\in S_0$, there exists $t_p\in \Q$ such that $3|\ord_p f(t_p)$.  For each such $p$ and $t_p$, let $\epsilon_p>0$ be as in Theorem \ref{avoid}. By hypothesis, there exists $t_2\in \Q$ such that $\Q(\sqrt{f(t_2)})$ is a quadratic extension of $\Q$ and the prime $2$ does not split in this extension.  Then for some sufficiently small $\epsilon_2>0$, by Krasner's lemma (or elementary arguments in this specific case) if $|t-t_2|_2<\epsilon_2$ then $\Q(\sqrt{f(t)})$ is a quadratic extension of $\Q$ and the prime $2$ does not split in this extension.  Let $t_\infty\in \Q$ and $\epsilon_\infty>0$ be such that $f(t)$ is negative (resp. positive) if $|t-t_\infty|<\epsilon_\infty$.  Let $\epsilon=\min_{p\in S_0}\epsilon_p$. Let $S=\{2,\infty\}$.  Consider the rational functions $\psi_i=y-Y_i(t)\in \Q(t,y)=\Q(C)$, $i=1,\ldots, r$.  By Theorem \ref{Scube} and Theorem \ref{avoid}, if $P\in C(\Qbar)$, $t=t(P)\in \Q$ and $|t-t_p|_p<\epsilon$ for all $p\in S_0$, then $\psi_i(P)=\sqrt{f(t)}-Y_i(t)$ generates the cube of a fractional ideal outside $2$, i.e.,  $\psi_i(P)\O_{\Q(P),S}=\mathfrak{a}_{P,i}^3$ for some fractional $\O_{\Q(P),S}$-ideal $\mathfrak{a}_{P,i}$, $i=1,\ldots, r$.  Furthermore, from our assumptions at $p=2,\infty$, $\Q(P)$ is an imaginary (resp.~real) quadratic field in which $2$ does not split.  Note also that if $t\in \Q$ and $H(t)<B$, then the discriminant of $\Q(\sqrt{f(t)})$ is $\ll B^d$ (we use here that $d$ is even).  By our hypotheses and the form of the map \eqref{descentmap}, the functions $\psi_1,\ldots, \psi_r$ generate a subgroup isomorphic to $\left(\mathbb{Z}/3\mathbb{Z}\right)^r$ in $\Qbar(C)^*/(\Qbar(C)^*)^3$.  Let $\phi=t$, a rational function of degree $2$ on $C$.  Combining all of the above, Theorem \ref{thmain} gives that there are $\gg X^\frac{1}{d}/\log X$ distinct imaginary (resp.~real) quadratic number fields $k$ with ${|\Disc(k)|<X}$ satisfying
\begin{align*}
\rk_3 \Cl(\Q(P))&\geq r+ \#S_{\rm fin}-\rk \O_{\Q(P),S}^*.
\end{align*}
Since $S=\{2,\infty\}$, $\#S_{\rm fin}=1$, and since there is a single prime lying above $2$ in $\O_{\Q(P)}$, we have $\rk \O_{\Q(P),S}^*=1$ (resp.~$\rk \O_{\Q(P),S}^*=2$).  This gives the desired result when $f$ takes both negative and positive values on $\mathbb{R}$.  Otherwise, we obtain the appropriate result for either real or imaginary quadratic fields, and the full theorem follows from Scholz's reflection principle \cite{Scholz}: if $d$ is a positive integer, then $\rk_3\Cl(\sqrt{-3d})-\rk_3\Cl(\sqrt{d})\in \{0,1\}$.
\end{proof}


\section{Mestre's construction of a $j$-invariant $0$ elliptic curve over $\mathbb{Q}(t,u)$ of rank at least $6$}
\label{SMestre}

We recall a construction of Mestre \cite{Mestre, Mestre95} which yields a $j$-invariant $0$ elliptic curve over $\mathbb{Q}(t,u)$ of rank at least $6$ (and after a specialization, a $j$-invariant $0$ elliptic curve over $\mathbb{Q}(t)$ of rank at least $7$).

Let $K$ be a field of characteristic $0$ and let $x_1,\ldots, x_5\in K$.  Let
\begin{align*}
x_6&=-(x_1+\cdots+x_5),\\
p(X)&=(X-x_1)\cdots(X-x_6)\\
&=X^6+a_4X^4+a_3X^3+a_2X^2+a_1X+a_0,\\
\intertext{and}
g(X)&=X^2+a_4/3.
\end{align*}
Then
\begin{align*}
g(X)^3-p(X)=r(X)
\end{align*}
for some cubic polynomial $r\in K[X]$.  Explicitly,
\begin{align*}
r(X)=r_3X^3+r_2X^2+r_1X+r_0,
\end{align*}
where $r_3=-a_3,r_2=a_4^2/3-a_2,r_1=-a_1$, and $r_0=(a_4/3)^3-a_0$.  Then the cubic curve $C:Y^3=r(X)$ will generically be a genus $1$ curve of $j$-invariant $0$ and possess the six $K$-rational points $P_i=(x_i,g(x_i))$, $i=1,\ldots, 6$.  To obtain a further point when $K=\mathbb{Q}(t,u)$, Mestre shows that if
\begin{align*}
x_1&=\frac{1-u^3}{4u}+t,\\
x_2&=\frac{1-u^3}{4u}-t,\\
x_3&=-\frac{u^3+3}{4u}+t,\\
x_4&=\frac{u^6-6u^3-3}{4(u^4-u)}-t,\\
x_5&=\frac{u^9 - u^6 + 15u^3 + 1}{4(u^7 -u)}+t,\\
x_6&=-(x_1+\cdots+x_5)=\frac{u^6+8u^3-1}{4(u^4+u)}-t,
\end{align*}
then $r_3=a_3=-1$.  Then since $r_3$ is a perfect cube, the curve $C$ has a $7$th rational point $P_7$ at infinity.  Setting, say, $P_7$ as the origin, one easily finds (by a specialization and explicit computation) that the other $6$ points are independent in the Mordell-Weil group of the resulting elliptic curve over $\mathbb{Q}(t,u)$.  Mestre goes further and uses this construction to find an elliptic curve of rank at least $7$ over $\mathbb{Q}(t)$.  However, putting the curve in Weierstrass form, the extra point constructed has the same $y$-coordinate as a previous point, and so the new point does not produce any extra $3$-torsion (via the map \eqref{descentmap}).

\section{$3$-torsion in hyperelliptic Jacobians}
\label{shyp}

Let $p$ be an odd prime.  As a function field analogue of constructing quadratic fields with an ideal class group of large $p$-rank, it is an interesting problem in its own right to construct hyperelliptic curves $C$ over $\Q$ such that $\rk_p \Jac(C)[p](\Q)$ is large.  For a given $p$-rank, it is further interesting to minimize the genus of the curve $C$ involved, both for intrinsic reasons and for applications to the enumerative problem of counting quadratic fields with interesting ideal class groups.  Since $\mathbb{Q}$ doesn't contain a primitive $p$th root of unity, a well-known argument using the Weil pairing gives the bound (for any smooth projective curve $C$)
\begin{align*}
\rk_p\Jac(C)[p](\Q)\leq g(C),
\end{align*}
where $g(C)$ denotes the genus of $C$.

We now restrict to the case $p=3$.  Using the constructions of the previous section we prove the following result.

\begin{theorem}
\label{Jrank}
There exists a hyperelliptic curve $C:y^2=f(t)$ over $\mathbb{Q}$ of genus $g$, $\deg f=d$, and $\rk_3\Jac(C)[3](\Q)\geq r$ for the following values of $r$, $d$, and $g=g(C)$:\\
\begin{center}
\begin{tabular}{|c|c|c|}
\hline
r & d & g\\
\hline
$1$ & $3$ & $1$\\
\hline
$2$ & $5$ & $2$\\
\hline
$3$ & $9$ & $4$\\
\hline
$4$ & $10$ & $4$\\
\hline
$5$ & $30$ & $14$\\
\hline
\end{tabular}\\
\bigskip
\end{center}

Explicitly, the last three entries are realized by the hyperelliptic curves
\begin{align*}
C_r:y^2=f_r(t), \quad r=3,4,5,
\end{align*}
with generators for $\Jac(C)[3](\Q)$ given by the classes of
\begin{align*}
D_{r,i}=
\begin{cases}
\frac{1}{3}\dv(y-y_{r,i}(t)),  &\text{if $r=3,5$}\\
\frac{1}{3}\dv(y-y_{r,i}(t))-\frac{1}{3}\dv(y-y_{r,5}(t)), &\text{if $r=4$}
\end{cases}
\end{align*}
for $i=1,\ldots, r$, where
\begin{align*}
f_3(t)=&t^{9} + 2973t^{6}-  369249t^3 + 11764900\\
y_{3,1}(t)=&t^6 - 106t^3 + 3430\\
y_{3,2}(t)=&\frac{1}{64}t^6 + \frac{269}{4}t^3 + 3430\\
y_{3,3}(t)=&t^6 + 36t^5 + 486t^4 + 3350t^3 + 13914t^2 + 33264t + 40474
\end{align*}
{\tiny
\begin{align*}
f_4(t)=&127358629188153017343112694654244t^{10} - 14476726558441542259500980593582900t^9\\
    &+767540949843094964859507359162484321t^8 - 23227949011157855871750302161149318060t^7\\
    &+486933739385947419621206507920009537350t^6 - 8471956828413213486742748322179256745500t^5\\
    &+139665528153448288531118705650287136663899t^4 - 1509800364506319291441531124462079071041720t^3\\
    &+14597743197263467927181474503046907251979462t^2 - 135004259433655686521826532061360904927910680t\\
    &+543592155691663960065241800360826161610140961\\
y_{4,1}(t)=&11285328049647162t^5 - 612703879315493343t^4 + 11259180860536474740t^3 + 124175441794992816207t^2\\
&-2894185136924624900028t + 23315023973129417008893\\
y_{4,2}(t)=&11285328049647162t^5 - 637837872300913137t^4 + 15898504501345253760t^3 - 133140352448943347487t^2\\
    &+2895345088136314829232t - 23315023973129417008893\\
y_{4,3}(t)=&11285328049647162t^5 - 641395912765696179t^4 + 15779426445792454284t^3 - 132367737077167916373t^2\\
    &+2891689893601551455028t - 23295069079544963156463\\
y_{4,4}(t)=&11285328049647162t^5 - 787772412770249571t^4 + 14665379977955069244t^3 - 219254957235699134757t^2\\
    &+2621647313739427449588t - 35358708563462647994607\\
y_{4,5}(t)=&11285328049647162t^5 + 2985212511317920527t^4 + 74598289369102611840t^3 + 1831498005344531215377t^2\\
    &+17007935571912588694272t + 183122730884960782522323	
\end{align*}
}
and
\begin{align*}
f_5(t)=&f_4(t^3),\\	
y_{5,i}(t)=&y_{4,i}(t^3), \quad i=1,\ldots, 5.
\end{align*}

\end{theorem}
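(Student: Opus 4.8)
The plan is to obtain every row from the chain \eqref{diag}: take a $j$-invariant $0$ elliptic curve $E\colon Y^2=X^3+f(t)$ over $\Q(t)$ carrying several independent $\Q(t)$-points, and transport those points into $\Jac(C)[3](\Q)$, $C\colon y^2=f(t)$, via the explicit map \eqref{explicitmap}. The genus column then needs no separate argument: a squarefree $f$ of degree $d$ gives $g(C)=\lfloor (d-1)/2\rfloor$, which is $1,2,4,4,14$ for $d=3,5,9,10,30$.

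First I would isolate the one algebraic fact doing the work. If $(X_0(t),Y_0(t))\in E(\Q(t))$ with $f$ squarefree, then the principal divisor $\dv(y-Y_0(t))$ on $C$ is divisible by $3$ in $\Div(C)$, so $\tfrac13\dv(y-Y_0(t))$ is a well-defined divisor class killed by $3$. On the affine part this is exactly the valuation-and-coprimality computation in the proof of Theorem~\ref{Scube}, using $(y-Y_0)(y+Y_0)=-X_0^3$; the behaviour at the points over $t=\infty$ is governed by $d\bmod 6$, so that for $d$ odd or $6\mid d$ one gets a single clean class, while for $d\equiv 2,4\pmod 6$ the individual divisors fail to be $3$-divisible, with the defect concentrated over $t=\infty$, and one obtains well-defined classes only from differences $\tfrac13\dv(y-Y_0(t))-\tfrac13\dv(y-Y_1(t))$ of two such functions---this is why the $r=4$ row records $D_{4,i}=\tfrac13\dv(y-y_{4,i}(t))-\tfrac13\dv(y-y_{4,5}(t))$ rather than $\tfrac13\dv(y-y_{4,i}(t))$ alone. (One could also quote Weil's theorem of decomposition \cite[2.7.15]{BG}.) This reduces each row to (i) producing $f$ and the functions $y_{r,i}$ and (ii) checking that the resulting classes are independent over $\mathbb{F}_3$, equivalently that no nontrivial combination $\sum_i c_i D_{r,i}$ is principal on $C_r$.

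The rows $r=1,2$ are elementary: $r=1$, $d=3$ is any elliptic curve with a rational point of order $3$, and $r=2$, $d=5$ is a genus-$2$ curve $y^2=(\text{quintic})$ whose Jacobian has rational $3$-torsion of rank $2$ (obtainable from a small specialization of the construction below or from the existing literature on hyperelliptic Jacobians with prescribed torsion). For $r=3,4,5$ the curves and functions are the explicit ones listed; they are produced by specializing Mestre's $j$-invariant-$0$ construction (Section~\ref{SMestre}) so that writing the resulting plane cubic $Y^3=r(X)$ in Weierstrass form $Y^2=X^3+f(t)$ yields a \emph{squarefree} $f$ of degree $9$, resp.\ $10$, carrying several independent $\Q(t)$-points, pushing those points through \eqref{explicitmap}, and---for the last row---applying the substitution $t\mapsto t^3$ to the degree-$10$ data, which raises the degree to $30$ (so now $6\mid d$ and the correction term disappears) and is exactly where the space of classes jumps from $3$-rank $4$ to $3$-rank $5$.

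The remaining step---verifying that the listed $D_{r,i}$ genuinely span $(\mathbb{Z}/3\mathbb{Z})^r$---is the main obstacle, and is carried out as an explicit computation in the Jacobians in reduced (Mumford) form, e.g.\ with Magma: confirm that $f_3,f_4$ (hence $f_5$) are squarefree and that each $\dv(y-y_{r,i}(t))$ is $3$-divisible in $\Div(C_r)$, then reduce every $\mathbb{F}_3$-combination of the $D_{r,i}$ to canonical form and check that only the trivial combination is principal. The genuinely heavy case is $r=5$: the Jacobian has genus $14$, so the divisor arithmetic is expensive, and---as the paper's footnote stresses---one would naively expect the descent map \eqref{descentmap} to kill roughly half of the Mestre points, so the fact that it retains a $3$-rank $5$ image is an experimental observation confirmed by the computation rather than a consequence of a rank bound.
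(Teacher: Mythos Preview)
Your proposal is correct and follows essentially the same route as the paper: Mestre's construction supplies the $\Q(t)$-points, the map \eqref{explicitmap} converts them to $3$-torsion classes on the hyperelliptic curve (with the difference correction when $d\equiv 4\pmod 6$, exactly as you diagnose for $r=4$), the substitution $t\mapsto t^3$ lifts the $r=4$ data to $r=5$, and independence is settled by direct Magma computation. The only detail you elide is how the degree-$9$ curve for $r=3$ actually arises: the paper uses the ad hoc specialization $(x_1,\ldots,x_6)=(2,3,-5,0,t,-t)$, which first produces $y^2=x^3+f_3(t^2)$ of degree $18$, and then observes that four of the points (namely $\tilde P_1,\tilde P_2,\tilde P_3,\tilde P_4+\tilde P_5$) have coordinates in $\Q(t^2)$, allowing descent to $f_3(t)$ of degree $9$---but since the explicit $f_3$ and $y_{3,i}$ are given in the statement, this is a matter of provenance rather than a gap in the verification.
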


We note that, in terms of the genus, the results for $r=1,2,4$ are sharp. We do not know if there exists a hyperelliptic curve with $(r,g)=(3,3)$.  However, it was shown in \cite{HLP} that there exist (non-hyperelliptic) curves $C$ of genus $3$ with $\rk_3\Jac(C)[3](\Q)=3$.

Combined with Theorem \ref{hyp}, the result for $r=3$ immediately gives:

\begin{corollary}
\label{r3}
There exist $\gg X^\frac{1}{9}/\log X$ imaginary quadratic number fields $k$ with ${|\Disc(k)|<X}$ and
\begin{align*}
& \rk_3 \Cl(k)\geq 3.
\end{align*}
\end{corollary}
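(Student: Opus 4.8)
The plan is to apply Theorem \ref{hyp} directly to the genus-$4$ hyperelliptic curve $C_3: y^2 = f_3(t)$ furnished by Theorem \ref{Jrank}. The key point is that, unlike the generic curve $C$ arising from Mestre's construction, the curve $C_3$ possesses a rational Weierstrass point: since $\deg f_3 = 9$ is odd, the hyperelliptic map $t \colon C_3 \to \mathbb{P}^1$ is ramified at infinity, and the unique point of $C_3$ lying over $t = \infty$ is a $\Q$-rational Weierstrass point. Thus the hypotheses of Theorem \ref{hyp} are satisfied with $g = 4$ and $m = 3$.

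The main steps are then as follows. First, I would invoke Theorem \ref{Jrank} (the $r = 3$ row) to obtain that $\rk_3 \Jac(C_3)(\Q)_{\tors} \geq \rk_3 \Jac(C_3)[3](\Q) \geq 3$; here one uses that the classes of $D_{3,1}, D_{3,2}, D_{3,3}$ are $3$-torsion, so they lie in the torsion subgroup and their independence mod $3$ gives the stated rank bound. Second, I would apply the imaginary-quadratic case of Theorem \ref{hyp} with $g = 4$, which yields $\gg X^{\frac{1}{2g+1}}/\log X = \gg X^{\frac{1}{9}}/\log X$ imaginary quadratic fields $k$ with $|\Disc(k)| < X$ and $\rk_3 \Cl(k) \geq \rk_3 \Jac(C_3)(\Q)_{\tors} \geq 3$. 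This is exactly the asserted conclusion.

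In truth there is essentially no obstacle here: the work has all been done in Theorem \ref{Jrank} (the explicit construction and verification of the divisor classes via Magma) and in Theorem \ref{hyp} (the enumerative machinery). The only thing one must be slightly careful about is confirming that the curve $C_3$ has odd-degree defining polynomial, hence a rational Weierstrass point at infinity, so that Theorem \ref{hyp} genuinely applies — by contrast, recall that the main theorem had to route around precisely the absence of such a point by appealing to Theorem \ref{mtheorem2} instead. One could equally phrase the proof as a special case of Theorem \ref{mtheorem2}, but since $f_3$ has odd degree $9$ (not divisible by $6$), the cleaner route is the direct appeal to Theorem \ref{hyp}. I would also remark, as the authors do after the corollary, that the genus here is not known to be optimal for $3$-rank $3$.
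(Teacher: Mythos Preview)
Your proposal is correct and matches the paper's own argument exactly: the paper simply states that the $r=3$ entry of Theorem \ref{Jrank}, combined with Theorem \ref{hyp}, immediately yields the corollary. Your observation that $\deg f_3=9$ is odd (hence $C_3$ has a rational Weierstrass point at infinity, so Theorem \ref{hyp} applies with $g=4$) is precisely the reason this direct route works here, in contrast to the $r=5$ case.
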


With further work (as in the next section) the result for $r=4$ should also lead to new enumerative results, although we did not pursue this.  For smaller $3$-rank, the best enumerative results for ideal class groups of quadratic fields are due to Heath-Brown \cite{HB2, HB1} (rank $1$, i.e., class number divisible by $3$) and Luca and Pacelli \cite{LP} (rank $2$), with many earlier results by other authors (e.g., \cite{BK}, \cite{CR}, \cite{Sound}).

\begin{proof}[Proof of Theorem \ref{Jrank}]
The first two entries in the table are well-known.  For any value of $a\in\Q\setminus\{ 0,\frac{1}{27}\}$, the elliptic curve $y^2=t^3+(t+a)^2/4$ has the $3$-torsion point $(0,a/2)$. For any value of $a,b\in\Q$, $a^2-4b^2\neq 0, b\neq 0$, the genus two curve $C_{a,b}:y^2=t^6+at^3+b^2$ satisfies $\rk_3\Jac(C_{a,b})[3](\Q)=2$, with generators for $\Jac(C_{a,b})[3](\Q)$ given by the classes of $\frac{1}{3}\dv(y-t^3\pm b)$. For appropriate choices of $a$ and $b$, $C_{a,b}$ possesses a rational Weierstrass point, and hence also admits a quintic Weierstrass model over $\Q$.

In the case $r=3$, we consider Mestre's construction in Section \ref{SMestre} with the parameters
\begin{align*}
(x_1,x_2,x_3,x_4,x_5,x_6)=(2,3,-5,0,t,-t).
\end{align*}
This yields a genus one curve $Y^3=r(X)$ over $\Q(t)$ with (at least) $6$ $\Q(t)$-rational points $(x_i,g(x_i))$, $i=1,\ldots, 6$.  Choosing, say, the first point as the origin, one finds the elliptic curve
\begin{align*}
\tilde{E}_3:y^2=x^3+f_3(t^2)
\end{align*}
and $5$ points $\tilde{P}_1,\ldots, \tilde{P}_5\in \tilde{E}_3(\Q(t))$ (the curve $\tilde{E}_3$ also appears in \cite{ST}).  The points $\tilde{P}_1,\tilde{P}_2,\tilde{P}_3,\tilde{P}_4+\tilde{P}_5\in \tilde{E}_3(\Q(t))$ have coordinates that are rational functions in $t^2$, and thus we find $4$ corresponding points, call them $P_1,\ldots, P_4$, on the elliptic curve
\begin{align*}
E_3:y^2=x^3+f_3(t).
\end{align*}
Let $y_{3,i}(t)=y(P_i)\in \Q(t)$ be the $y$-coordinates of the $4$ points.  Abusing notation, we also view $y_{3,i}(t)\in \Q(C_3)=\Q(t,y)$ and consider the rational function $y-y_{3,i}(t)$ on the hyperelliptic curve $C_3$.  By explicit computation, one verifies that $\dv(y-y_{3,i}(t))=3D_{3,i}$ for some divisor $D_{3,i}$ on $C_3$, $i=1,\ldots, 4$.  Thus, the divisors $D_{3,i}$ yield $3$-torsion classes $[D_{3,i}]\in \Jac(C_3)[3](\Q)$, $i=1,\ldots, 4$.  By explicit computation (with Magma), one finds that $[D_{3,1}], [D_{3,2}]$, and $[D_{3,3}]$ are independent (over $\mathbb{Z}/3\mathbb{Z})$, while $[D_{3,1}]+\cdots +[D_{3,4}]=0$.  Therefore $\rk_3\Jac(C_3)[3](\Q)\geq 3$, giving the $3$rd entry in the table.

Finally, we describe the (related) constructions for $r=4,5$.  The construction from Section \ref{SMestre} yields an elliptic curve over $\Q(t,u)$ with $6$ $\Q(t,u)$-points that are independent in the Mordell-Weil group.  Specializing to $u=2$ and rescaling $t$ (to simplify the resulting polynomials) we find the elliptic curve
\begin{align*}
E_4:y^2=x^3+f_4(t),
\end{align*}
with six corresponding points $P_1,\ldots, P_6\in E_4(\Q(t))$ (the sixth point turns out not to give new torsion).  We set $y_{4,i}(t)=y(P_i)\in \Q(t)$ , $i=1,\ldots, 5$, and view $y_i(t)\in \Q(C_4)=\Q(t,y)$.  Define the divisors
\begin{align*}
D_{4,i}=\frac{1}{3}\dv(y-y_{4,i}(t))-\frac{1}{3}\dv(y-y_{4,5}(t)), \quad i=1,\ldots, 4
\end{align*}
on $C_4$.  Using Magma, one verifies that the divisor classes $[D_{4,1}],\ldots, [D_{4,4}]$ are independent over $\mathbb{Z}/3\mathbb{Z}$.  Therefore $\rk_3\Jac(C_4)[3](\Q)\geq 4$.  Finally, with
\begin{align*}
f_5(t)=&f_4(t^3),\\	
y_{5,i}(t)=&y_{4,i}(t^3), &&i=1,\ldots, 5\\
D_{5,i}=&\frac{1}{3}\dv(y-y_{5,i}(t)), &&i=1,\ldots, 5,
\end{align*}
a calculation in Magma with $[D_{5,1}],\ldots, [D_{5,5}]$ shows that $\rk_3\Jac(C_5)[3](\Q)\geq 5$.
\end{proof}

\section{Proof of Theorem \ref{mtheorem}}

The proof of Theorem \ref{mtheorem} is based on Theorem \ref{mtheorem2} and a slight modification of the construction from last section when $r=5$ (so that the hypotheses of Theorem \ref{mtheorem2} are appropriately satisfied).

\begin{proof}[Proof of Theorem \ref{mtheorem}]

We use Mestre's construction from Section \ref{SMestre}, with an appropriately chosen specialization so that \eqref{h1} and \eqref{h2} of Theorem \ref{mtheorem2} hold and $f$ takes negative values on $\mathbb{R}$.  For this purpose, letting $u=-\frac{9}{5}$ and replacing $t$ by $t^3$ in the constructions of Section \ref{SMestre}, we find an elliptic curve
\begin{align*}
E:y^2=x^3+f(t),
\end{align*}
with $f\in \mathbb{Z}[t]$, $\deg f=30$, and six points $P_1,\ldots, P_6\in E(\Q(t))$. Let $y_i(t)=y(P_i)\in \Q(t)$ , $i=1,\ldots, 6$.  Let $C$ be the hyperelliptic curve $y^2=f(t)$ and view $y_i(t)\in \Q(C)=\Q(t,y)$.  Using Magma, one verifies that $\dv(y-y_i(t))=3D_i$ for some divisor $D_i$ on $C$, $i=1,\ldots, 6$, and that $[D_1],\ldots, [D_5]$ generate a subgroup $\left(\mathbb{Z}/3\mathbb{Z}\right)^5$ in $\Jac(C)(\Q)$.  Moreover, the leading coefficient of $f$ is positive, $f(0)$ is negative, $2$ and $5$ are the only prime factors of $\gcd(f(0), f(1))$, $\ord_2(f(0))=3$ and $\ord_5(f(0))=12$. Then \eqref{h1} of Theorem \ref{mtheorem2} holds (for some choice of $t_p\in\{0,1\}$ for every odd prime $p$), and \eqref{h2} holds as $2$ ramifies in $\Q(\sqrt{f(0)})$.  Since $\deg f=d=30$ and $f$ takes negative values on $\mathbb{R}$, the conclusion of Theorem \ref{mtheorem2} gives the desired statement.
\end{proof}

\bibliography{classgroup}

\end{document}